\theoremstyle{plain} 
\newtheorem{theorem}{\indent\sc Theorem}[section]
\newtheorem{lemma}[theorem]{\indent\sc Lemma}
\newtheorem{corollary}[theorem]{\indent\sc Corollary}
\newtheorem{proposition}[theorem]{\indent\sc Proposition}
\theoremstyle{definition} 
\newtheorem{definition}[theorem]{\indent\sc Definition}
\newtheorem{remark}[theorem]{\indent\sc Remark}
\begin{document}

\title[Almost Ricci-flat 5-manifolds]{Kummer-type constructions of almost Ricci-flat 5-manifolds}

\author{Chanyoung Sung}



\subjclass[2020]{ 
Primary 53C20; Secondary 53C25.
}
%
\keywords{ 
almost Ricci-flat, Kummer-type construction, collapsing.
}


\address{Dept. of mathematics education\endgraf
Korea National University of Education\endgraf
Cheongju\endgraf
28173\endgraf
Republic of Korea
}
\email{cysung@kias.re.kr}


\maketitle

\begin{abstract}
A smooth closed manifold $M$ is called almost Ricci-flat if $$\inf_g||\textrm{Ric}_g||_\infty\cdot \textrm{diam}_g(M)^2=0$$
where $\textrm{Ric}_g$ and $\textrm{diam}_g$ respectively denote the Ricci tensor and the diameter of $g$ and $g$ runs over all Riemannian metrics on $M$.
By using Kummer-type method we construct a smooth closed almost Ricci-flat nonspin 5-manifold $M$ which is simply connected.
It's minimal volume vanishes, namely it collapses with sectional curvature bounded.
\end{abstract}

\setcounter{section}{0}
\setcounter{equation}{0}

\section{Introduction}

M. Gromov \cite{gromov-1} considered an almost flat manifold $M$ which is defined as a smooth closed manifold satisfying
$$\inf_g||\textrm{Rm}_g||_\infty\cdot \textrm{diam}_g(M)^2=0$$ where $\textrm{Rm}_g$ and $\textrm{diam}_g$ respectively denote the Riemann curvature tensor and the diameter of $g$ and $g$ runs over all smooth Riemannian metrics on $M$. Such a manifold has to be a collapsing manifold and finitely covered by a compact nilmanifold.
Moreover a volume-noncollapsed almost flat manifold is actually flat, more precisely given $n\in \Bbb Z^+$ and $v > 0$ there is some $\varepsilon= \varepsilon(n, v) > 0$ so that
any Riemannian $n$-manifold $(M, g)$ with $\textrm{Vol}_g(M) \geq v, \textrm{diam}_g(M)\leq 1$, and $|\textrm{Rm}_g|\leq \varepsilon$
where $\textrm{Vol}_g$ denotes the volume of $g$ admits a flat metric.

In an analogous way V. Kapovitch and J. Lott \cite{Lott1, Lott2} considered almost Ricci-flat manifolds with the aim of obtaining a Ricci-flat manifold.
A smooth closed manifold $M$ is called almost Ricci-flat if $\mu(M)=0$ for $$\mu(M):=\inf_g||\textrm{Ric}_g||_\infty\cdot \textrm{diam}_g(M)^2$$
where $\textrm{Ric}_g$ denotes the Ricci tensor of $g$ and $g$ runs over all smooth Riemannian metrics on $M$.
They found some topological conditions to ensure that a volume-noncollapsed almost Ricci-flat manifold admits a Ricci-flat metric.

By the Cheeger-Gromoll splitting theorem \cite{CG} any smooth closed Ricci-flat manifold must be finitely covered by the product of a flat $m$-torus $T^m$ and
a compact simply connected Ricci-flat manifold. According to Berger's classification \cite{ber, peter},
an irreducible simply connected Ricci-flat $n$-manifold is either locally symmetric or has holonomy $SU(\frac{n}{2})$ or $Sp(\frac{n}{4})$ or $G_2$ for $n=7$ or
$\rm Spin(7)$ for $n=8$, or $SO(n)$. In case that it is locally symmetric, it must be flat, and there have been found many examples of compact simply connected Ricci-flat manifolds with special holonomy.

There are also examples of almost Ricci-flat manifolds which do not admit a Ricci-flat metric. M. Anderson \cite{ander} constructed such examples in dimension 4 by performing
surgeries along circles of $T^4$ with $(D^2\times S^2,g_{S})$ where $g_{S}$ is the Riemannian Schwarzschild metric.
While it is perhaps remote from the complete understanding of almost Ricci-flat manifolds, it is worthwhile to have different types of examples of almost Ricci-flat manifolds at hand.

Gluing techniques are major tools to construct geometric structures in differential geometry.
In particular Kummer-type construction has been useful for constructing simply connected Ricci-flat manifolds with special holonomy such as hyper-K\"ahler $K3$ surfaces \cite{topi1, topi2, LS}, Joyce's $G_2$-manifolds \cite{joyce1}, and Joyce's $Spin(7)$-manifolds \cite{joyce2}.
S. Brendle and N. Kapouleas \cite{bren} used this method to construct simply connected almost Ricci-flat 4-manifolds.
In this article we use Kummer-type construction to construct simply connected almost Ricci-flat 5-manifolds.  More precisely we prove
\begin{theorem}\label{main-thm}
There exists a smooth closed almost Ricci-flat nonspin 5-manifold $X$ which is simply connected and has a nontrivial torsion subgroup of $H_2(X,\Bbb Z)$.
A sequence of metrics on $X$ realizing $\mu(X)=0$ has a positive lower bound of volumes, but the minimal volume $\textrm{MinVol}(X)$ of $X$ vanishes so that $X$ collapses
with sectional curvature bounded.
\end{theorem}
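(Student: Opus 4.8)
The plan is to realize $X$ as a Kummer-type resolution built from a flat $5$-orbifold together with a model collapsing geometry, so that both the almost Ricci-flat condition and the topological claims (simple connectivity, nonspin, torsion in $H_2$, vanishing minimal volume) can be verified separately. First I would fix a flat orbifold $T^5/\Gamma$ for a suitable finite group $\Gamma$ acting with isolated or low-dimensional singular strata; the natural candidate is an involution or $\Bbb Z_k$-action whose fixed-point set is understood, since the quotient then carries a flat metric away from the singularities. The key point is to choose $\Gamma$ so that the resolution $X$ of $T^5/\Gamma$ is simply connected: this forces $\Gamma$ to be generated by its elements with fixed points (so that loops can be contracted through the resolved regions), which constrains the choice of action considerably.

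Next I would construct the metric. The strategy is to glue, into neighborhoods of the singular set, a known Ricci-flat (or asymptotically flat) model space that resolves the orbifold singularity, and then to \emph{scale this model down} by a parameter $\varepsilon\to 0$. On the bulk flat region the metric is Ricci-flat exactly, and on the gluing annuli one produces a small error in the Ricci tensor that can be made $o(1)$ as $\varepsilon\to 0$ after rescaling the diameter to be bounded; this is precisely the Kummer-type mechanism used in \cite{topi1, topi2, joyce1, joyce2, bren}. Because the gluing region collapses as $\varepsilon\to 0$, the product $\|\textrm{Ric}_g\|_\infty\cdot\textrm{diam}_g(X)^2$ tends to $0$, giving $\mu(X)=0$. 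The main obstacle I anticipate lies here: unlike the $4$-dimensional case where the Eguchi--Hanson space furnishes an exact Ricci-flat resolution, in dimension $5$ one does not have a hyperkähler ALE model, so I would expect the construction to glue in an $S^1$-bundle or a product of a lower-dimensional ALE space with a circle, yielding a metric that is only \emph{almost} Ricci-flat rather than Ricci-flat, and the delicate estimate is to control the curvature on the neck where the circle fiber is collapsing with bounded sectional curvature.

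For the topology, I would compute $\pi_1(X)$ by van Kampen applied to the decomposition of $X$ into the resolved singular neighborhoods and the flat bulk, using simple connectivity of the local resolution models to kill $\pi_1(T^5/\Gamma)$. The torsion in $H_2(X,\Bbb Z)$ and the nonspin property I would extract from the topology of the glued-in resolution: a Mayer--Vietoris computation should produce a nontrivial finite subgroup of $H_2$ coming from the vanishing cycles of the resolution, and the nonspin condition would be certified by exhibiting a class on which $w_2(X)$ is nonzero, e.g. via the second Stiefel--Whitney class of the $S^1$-bundle used in the model. Finally, the two volume statements are complementary: the lower bound on volumes along the $\mu(X)=0$ sequence follows because the bulk flat region retains definite volume under the construction, whereas $\textrm{MinVol}(X)=0$ requires a \emph{separate} family of metrics, obtained by collapsing the circle directions of $X$ entirely (an $F$-structure or torus-action collapse) with sectional curvature bounded; I would verify that $X$ admits such a bounded-curvature collapse by exhibiting the relevant polarized $F$-structure, which in this $S^1$-bundle picture is geometrically natural.
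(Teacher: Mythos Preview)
Your overall architecture matches the paper almost exactly: start from $T^5/\Gamma$ with $\Gamma=(\Bbb Z_2)^3$ having circle fixed loci, resolve each $S^1\times(\Bbb C^2/\{\pm1\})$ singularity by $S^1\times Y$ with $Y$ carrying the Eguchi--Hanson metric, scale the ALE piece by a large parameter to get $\mu(X)=0$ with volume bounded below, and exhibit a polarized $\mathcal F$-structure for $\mathrm{MinVol}(X)=0$. Your anticipation that the $5$-dimensional model is ``lower-dimensional ALE times a circle'' is precisely what the paper uses. One minor correction: along the almost-Ricci-flat sequence the circle factor does \emph{not} collapse; it keeps unit length after rescaling, and only the Eguchi--Hanson core shrinks.

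Two of your sub-arguments, however, would not go through as stated. For the nonspin claim you propose detecting $w_2(X)$ on the glued-in model, but $S^1\times Y$ is spin ($Y$ is hyperk\"ahler), so no obstruction lives there. The paper instead shows that the \emph{bulk} $\breve T=T^5/\Gamma-U$ is already nonspin: the $\Gamma$-action on the trivialized frame bundle of $T^5$ cannot be lifted to $Spin(5)$ because the Clifford lifts $\hat M_\alpha,\hat M_\beta$ of the linear parts anticommute, $\hat M_\alpha\hat M_\beta=-\hat M_\beta\hat M_\alpha$, while $\alpha,\beta$ commute in $\Gamma$.

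Likewise, the torsion in $H_2(X,\Bbb Z)$ does not arise from the ``vanishing cycles'': the twelve resolution $S^2$'s contribute a free $\Bbb Z^{12}$ (indeed the paper computes $H^2(X,\Bbb Z)\simeq\Bbb Z^{13}$), so a direct Mayer--Vietoris over $\Bbb Z$ would not obviously exhibit torsion there. The paper argues indirectly: if $H_2(X,\Bbb Z)$ were torsion-free then $w_2(X)$ would be the mod~$2$ reduction of an integral class, which after restricting to $\breve T$ and pulling back to $T^5$ would force $\pi^*w_2(\breve T)\neq 0$; but $\pi^{-1}(\breve T)\subset T^5$ is parallelizable and hence spin, a contradiction. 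So the nonspin and torsion statements are intertwined, and both ultimately trace back to the global $\Gamma$-action rather than to the local resolution.
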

The \emph{minimal volume} introduced by Gromov \cite{gromov} is defined as $$\textrm{MinVol}(X):=\inf_g \{\textrm{Vol}_g(X)|\ |K_g|\leq 1\}$$
where $K_g$ denotes the sectional curvature of $g$ and $g$ runs over all Riemannian metrics on $X$.
The vanishing of $\textrm{MinVol}(X)$ implies that all the characteristic numbers of $X$ are zero via Chern-Weil theory
and Gromov's \emph{simplicial volume} $$||X||:=\inf \{\Sigma_i |r_i| \ | \ r_i\ \textrm{are the coefficients of a real cycle representing}\ [X]\}$$ of $X$ is also zero
by the inequality $$||X|| \leq (n-1)^nn!\ \textrm{MinVol}(X)$$ where $n=\dim (X)$.

Like other Kummer-type constructions, this example may shed light on the study of canonical geometry in dimension 5.
However, for now we do not know whether it admits a Ricci-flat metric or not.

\section{Construction of $X$}
We start with a flat $5$-dimensional torus $T^5=\Bbb R^5/ \Bbb Z^5$ where $$\Bbb Z^5=\{(x_1,\cdots,x_5)|x_i\in \Bbb Z\}.$$
For convenience we always express a point of $T^5$ as $x=(x_1,\cdots,x_5)\in \Bbb R^5$ modulo $\Bbb Z^5$. For the following 3 isometric involutions
$$\alpha : x \mapsto (x_1,-x_2,-x_3,\frac{1}{2}-x_4,-x_5),$$
$$\beta : x \mapsto (-x_1,\frac{1}{2}-x_2,-x_3,x_4,-x_5),$$
$$\gamma : x \mapsto (-x_1,-x_2,\frac{1}{2}-x_3,-x_4,x_5)$$ of $T^5$,
one can easily check that they all commute and hence generate $$\Gamma:=(\Bbb Z_2)^3=\langle \alpha \rangle \oplus \langle \beta \rangle \oplus \langle \gamma \rangle.$$
The fixed point sets of $\alpha$ are $16$ copies of circle $$S_\alpha:=S^1\times \{(p_2,\cdots,p_5)\}$$
where $p_2,p_3,p_5$ are either 0 or $\frac{1}{2}$ and $p_4$ is either $\frac{1}{4}$ or $\frac{3}{4}$, and similarly we have 16 copies of $S_\beta$ and  16 copies of $S_\gamma$.
All together they are disjoint 48 circles which we shall denote by
$$Z:=\textrm{the union of 48 singular circles}$$
and these are all the singular locus of the $\Gamma$ action. Indeed
$$\alpha\beta : x \mapsto (-x_1,\frac{1}{2}+x_2,x_3,\frac{1}{2}-x_4,x_5),$$
$$\beta\gamma : x \mapsto (x_1,\frac{1}{2}+x_2,\frac{1}{2}+x_3,-x_4,-x_5),$$
$$\alpha\gamma : x \mapsto (-x_1,x_2,\frac{1}{2}+x_3,\frac{1}{2}+x_4,-x_5),$$
and
$$\alpha\beta\gamma : x \mapsto (x_1,\frac{1}{2}-x_2,\frac{1}{2}-x_3,\frac{1}{2}+x_4,x_5),$$ act without any fixed point.
Moreover $\langle\alpha,\beta\rangle$ acts freely on the set of 16 $S_\gamma$ identifying them into 4 $S_\gamma$.
Similarly $\langle\beta,\gamma\rangle$ identifies 16 $S_\alpha$ to 4 of them, and $\langle\alpha,\gamma\rangle$ identifies 16 $S_\beta$ to 4 of them. Thus the singular set of
$T^5/\Gamma$ is a disjoint union of 12 copies of $S^1$, and the singularity at each $S^1$ is modeled on $S^1\times (\Bbb C^2/\{\pm 1\})$.

To resolve these singularities, we delete a neighborhood $S^1\times (B_{\frac{1}{10}}/\{\pm 1\})$ of each singular $S^1$
where $B_{\frac{1}{10}}$ is a ball of radius $\frac{1}{10}$ around $0\in \Bbb C^2$, and then graft $S^1\times Y$ along each boundary component $S^1\times \Bbb RP^3$,
where $Y$ is a disk bundle of $T^*S^2$. The resulting manifold is our desired manifold $X$.
Let's set $$\breve{T}:=T^5/\Gamma-U$$ where $U$ is the union of 12 neighborhoods $S^1\times (B_{\frac{1}{10}}/\{\pm 1\})$.

There is another description of $X$ viewed as the $\Gamma$-quotient of a ``blow-up" of $T^5$ along $Z$.
For the quotient map $$\pi : T^5\rightarrow T^5/\Gamma,$$ the boundary of $\pi^{-1}(\breve{T})$ consists of 48 $S^1\times S^3$,
so one can glue 48 $S^1\times \hat{Y}$ along the boundary, where $\hat{Y}$ is the associated disk bundle of the Hopf fibration over $S^2$.
(Indeed $\hat{Y}$ is the algebro-geometric blow-up of 4-ball $B^4$ at the origin, where the origin is replaced by a sphere of self-intersection $-1$.)
By the ``blow-up" of $T^5$ we mean this resulting manifold which we denote by $\hat{T}^5$. Since the $\Bbb Z_2$-quotient of $\hat{Y}$ is $Y$,
the $\Gamma$-action on $T^5$ can be (smoothly) extended to $\hat{T}^5$ with the quotient space equal to $X$.
For the later purpose, we denote this quotient map by $$\hat{\pi}: \hat{T}^5\rightarrow X.$$

Now we will construct the metric on $X$. First note that $\breve{T}$ has a flat metric coming from the flat orbifold metric on $T^5/\Gamma$,
and we dilate this metric by multiplying $(20d)^2$ for a constant $d\gg 1$. $T^*S^2$ has an ALE Ricci-flat K\"ahler metric known as the Eguchi-Hanson metric
$$g_{\textrm{EH}}=\frac{dr^2}{1-r^{-4}}+r^2(1-r^{-4})\sigma_3^2+r^2(\sigma_1^2+\sigma_2^2)$$ for $r\in [1,\infty)$ and
the left-invariant coframe $\{\sigma_1,\sigma_2,\sigma_3\}$ of $S^3$ satisfying $$d\sigma_i=2\varepsilon_{ijk}\sigma_j\wedge\sigma_k.$$
It asymptotically approaches the Euclidean metric so that there exists a constant $C_I$ such that outside a neighborhood of the zero section,
\begin{eqnarray}\label{metric-bound1}
|\frac{\partial^{|I|}}{\partial x_{I}}((g_{\textrm{EH}})_{\mu\nu}-\delta_{\mu\nu})|\leq \frac{C_I}{|x|^{4+|I|}}
\end{eqnarray}
for all $\mu,\nu=1,\cdots,5$, where $I:=(i_1,\cdots,i_n)$ denotes a multi-index with all $i_j\geq 0$.
By the curvature formula
\begin{eqnarray}\label{Sinai-mountain1}
R^l_{ijk}=-\partial_j\Gamma^l_{ik}+\partial_i\Gamma^l_{jk}-\Gamma^m_{ik}\Gamma^l_{jm}+\Gamma^m_{jk}\Gamma^l_{im}
\end{eqnarray}
where
$$\Gamma^k_{ij}=\frac{1}{2}g^{kl}(\partial_ig_{jl}+\partial_jg_{il}-\partial_lg_{ij}),$$
the curvature $R^l_{ijk}$ of $g_{\textrm{EH}}$ decays to zero as $O(r^{-6})$.

Take any smooth cutoff function $\rho_1(r) : \Bbb R^+ \rightarrow \Bbb R^+\cup \{0\}$ satisfying
$$
\rho_1(r) =
\left\{
\begin{array}{ll}  1
   &\mbox{for } r\leq 1\\
  0 &\mbox{for } r\geq 2.
\end{array}\right.
$$
Then our desired cutoff function is $\rho_d(r):=\rho_1(\frac{r}{d})$ which then satisfies
$$
\rho_d(r) =
\left\{
\begin{array}{ll}  1
   &\mbox{for } r\leq d\\
  0 &\mbox{for } r\geq 2d,
\end{array}\right.
$$
and $$|D^m\rho_d|\leq \frac{c_m}{d^m}$$ for any positive integer $m$ where $c_m$ is a constant depending on $m$.

Consider a metric $$\tilde{g}_{d}:=\rho_d\ g_{\textrm{EH}}+(1-\rho_d)g_{\textrm{E}}$$ on $T^*S^2$, which is equal to the Euclidean metric $g_{\textrm{E}}$ for $r\geq 2d$, and put this metric on $Y$ using a diffeomorphism to $T^*S^2$.
We now glue the metrics on $\breve{T}$ and $S^1(20d)\times Y$ where $S(20d)$ denotes the circle of length 20d to get a smooth metric $g_d$ on $X$.
The important property of $g_d$ is that there exists a constant $\tilde{C}_I$ independent of $d$ such that
\begin{eqnarray}\label{metric-bound2}
|\frac{\partial^{|I|}}{\partial x_{I}}((g_d)_{\mu\nu}-\delta_{\mu\nu})|\leq \frac{\tilde{C}_I}{d^{4+|I|}}
\end{eqnarray}
on the gluing regions for all $\mu,\nu=1,\cdots,5$ and hence $|\textrm{Ric}_{{g}_d}|$ is $O(\frac{1}{d^6})$ by (\ref{Sinai-mountain1}).

By shrinking back $(X,g_d)$ by the factor $(\frac{1}{20d})^2$, we get a desired metric $g$ on $X$ satisfying
 $$||\textrm{Ric}_{g}||_\infty\leq \frac{C_1}{d^4},\ \ \textrm{Vol}_g(X)\geq C_2,\ \ \textrm{diam}_g(X)\in [1,1+C_3]$$ for constants $C_i>0$ independent of $d$.
Taking $d$ arbitrarily large, one can achieve the almost Ricci-flatness on $X$.

\section{Topology of $X$}
In this section any path and loop are meant to be continuous.
\begin{lemma}\label{Ha-Seong}
For any base point $p_0\in T^5$ and any loop $\tau:[0,1]\rightarrow T^5$ based at $p_0$, $\pi(\tau)$ is homotopic to the constant loop at $\pi(p_0)$.
\end{lemma}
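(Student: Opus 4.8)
The plan is to show that the induced homomorphism $\pi_*:\pi_1(T^5,p_0)\to \pi_1(T^5/\Gamma,\pi(p_0))$ is trivial, which is exactly the content of the lemma. Since $\pi_1(T^5,p_0)\cong \Bbb Z^5$ is generated by the five coordinate loops $e_1,\dots,e_5$ (where $e_i$ winds once around the $i$-th circle factor) and $\pi_*$ is a homomorphism, it suffices to prove that each $\pi(e_i)$ is null-homotopic: an arbitrary $\tau$ represents $\sum_i n_i e_i$, so $\pi_*[\tau]=\prod_i\pi_*[e_i]^{n_i}=1$. Because $T^5/\Gamma$ is path-connected, a loop is freely homotopic to a constant precisely when its based class is trivial, so I am free to argue up to free homotopy and to slide the basepoint to any convenient position.

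The key device is a folding observation. Suppose $\ell:[0,1]\to T^5$ is a loop and $g\in\Gamma$ is one of the three involutions with the equivariance property $g(\ell(t))=\ell(1-t)$ for all $t$. Since $g\in\Gamma$ gives $\pi\circ g=\pi$, we obtain $\pi(\ell(t))=\pi(\ell(1-t))$ for all $t$. Writing $a:=\pi\circ\ell|_{[0,1/2]}$, the second half $\pi\circ\ell|_{[1/2,1]}$ then traverses $a$ in reverse, so $\pi\circ\ell$ is the concatenation $a*\bar a$ and is therefore null-homotopic rel endpoints. Hence it is enough, for each $i$, to exhibit a loop $\ell_i$ freely homotopic in $T^5$ to $e_i$ together with an involution $g_i\in\Gamma$ satisfying $g_i(\ell_i(t))=\ell_i(1-t)$; projecting a free homotopy from $e_i$ to $\ell_i$ into the quotient then shows $\pi(e_i)$ is freely null-homotopic, so $\pi_*[e_i]=1$.

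To build $\ell_i$ I take the straight loop $\ell_i(t)=p+t\,e_i$ that winds once in the $x_i$-direction while holding the remaining coordinates at constants $p_j$. The equivariance $g_i(\ell_i(t))=\ell_i(1-t)$ forces $g_i$ to act on the $i$-th coordinate as the pure reflection $x_i\mapsto -x_i$ (with no $\tfrac12$-shift) and to fix each transverse value $p_j$; the latter is achievable exactly when $g_i$ acts on the $j$-th coordinate by $x_j$, by $-x_j$, or by $\tfrac12-x_j$ (taking $p_j$ arbitrary, $p_j\in\{0,\tfrac12\}$, or $p_j\in\{\tfrac14,\tfrac34\}$ respectively), but never by a pure translation $x_j\mapsto\tfrac12+x_j$. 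For instance, for $i=1$ I take $g_1=\gamma$ and $\ell_1(t)=(t,0,\tfrac14,0,0)$, so that $\gamma(\ell_1(t))=(-t,0,\tfrac14,0,0)=\ell_1(1-t)$. Inspecting the explicit formulas, $\gamma$ likewise handles $i=4$, $\alpha$ handles $i=2$ and $i=3$, and $\beta$ handles $i=5$, with transverse constants chosen as above.

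The main obstacle is precisely this last bookkeeping: for every coordinate direction one must produce a genuine involution in $\Gamma$ (not merely some group element) that reflects that coordinate with no translational part and introduces no pure translation in any other coordinate. This is where the explicit form of the maps matters, since the fixed-point-free elements $\alpha\beta,\ \beta\gamma,\ \alpha\gamma,\ \alpha\beta\gamma$ each carry a $\tfrac12$-translation and are therefore useless for folding; the argument rests entirely on the three generators $\alpha,\beta,\gamma$ and their coordinate actions. Once the five cases are verified, the lemma follows at once.
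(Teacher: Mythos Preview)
Your proof is correct and follows essentially the same route as the paper's: reduce to the five coordinate generators, freely homotope each to a conveniently placed straight loop, and use a generating involution of $\Gamma$ that sends $\ell_i(t)$ to $\ell_i(1-t)$ so that the image in $T^5/\Gamma$ folds as a path followed by its own reverse (the paper isolates this last step as a separate lemma). The only difference is cosmetic: for $i=1$ the paper uses $\beta$ with base point $(0,\tfrac14,0,0,0)$ while you use $\gamma$ with $(0,0,\tfrac14,0,0)$, and similarly elsewhere, but the mechanism is identical.
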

\begin{proof}
One can take generators $[\rho_1],\cdots,[\rho_5]$ of $\pi_1(T^5,p_0)\simeq \Bbb Z^5$ such that each $\rho_i$ is parallel to $x_i$-axis, meaning that
$$\rho_i(t)=p_0+te_i\ \ \ \textrm{mod}\ \Bbb Z^5$$ for $t\in [0,1]$ where $\{e_1,\cdots,e_5\}$ is the standard basis of $\Bbb R^5$.
Then there exists $\rho_{i_1},\cdots,\rho_{i_k}$ and $\epsilon_1,\cdots,\epsilon_k\in \{\pm 1\}$ so that $[\tau]=[\rho_{i_1}^{\epsilon_1}\cdot\ \cdots\ \cdot\rho_{i_k}^{\epsilon_k}]$ where $(\cdot)^{-1}$ denotes the inverse path.

It is enough to show that each $\pi(\rho_i)$ is homotopic to the constant loop at $\pi(p_0)$. First let's see the case of $\pi(\rho_1)$.
Note that the loop $\rho_1$ is freely homotopic to the loop $\bar{\rho}_1$ based at $p_1=(0,\frac{1}{4},0,0,0)\in T^5$ defined by
$$\bar{\rho}_1(t)=p_1+te_1\ \ \ \textrm{mod}\ \Bbb Z^5$$ for $t\in [0,1]$. (The translation by $s(p_1-p_0)$ for $s\in [0,1]$ gives the free homotopy of two loops.)
The reason for this particular choice of $p_1$ will become obvious right next.

Since $$\beta(\bar{\rho}_1(t))=p_1-te_1\equiv p_1+(1-t)e_1\ \ \ \textrm{mod}\ \Bbb Z^5,$$ $\pi(\bar{\rho}_1(t))=\pi(p_1+(1-t)e_1)$. Then  writing $\pi(\bar{\rho}_1)$ as
$$
\pi(\bar{\rho}_1(t)) =
\left\{
\begin{array}{ll} \pi(p_1+te_1)
   &\mbox{for } t\in [0,\frac{1}{2}]\\
   \pi(p_1+(1-t)e_1) &\mbox{for } t\in [\frac{1}{2},1],
\end{array}\right.
$$
it is homotopic to the constant loop at $\pi(p_1)$ by the lemma below.
Thus $\pi(\rho_1)$ is freely homotopic to the constant loop at $\pi(p_1)$, and this implies that $[\pi(\rho_1)]=0\in \pi_1(T^5/\Gamma, \pi(p_0))$.

Other cases can be verified in the same way. For instance, for the case of $\pi(\rho_2)$, instead of $p_1$ one can use $p_2=(0,0,0,\frac{1}{4},0)$ fixed by $\alpha$
which inverts the direction of $\bar{\rho}_2$.
\end{proof}

We name the following elementary fact as an lemma for convenience.
\begin{lemma}\label{Ha-Seong-9}
Let $M$ be any topological space and $\tau:[0,\frac{1}{2}]\rightarrow M$ be a path. Then the loop $\bar{\tau}:[0,1]\rightarrow M$ defined as
$$
\bar{\tau}(t)=
\left\{
\begin{array}{ll} \tau(t)
   &\mbox{for } t\in [0,\frac{1}{2}]\\
   \tau(1-t) &\mbox{for } t\in [\frac{1}{2},1]
\end{array}\right.
$$
is homotopic to the constant loop at $\tau(0)$.
\end{lemma}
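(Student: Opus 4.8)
The plan is to recognize $\bar\tau$ as a path traversed forward and then immediately backward --- the classical null-homotopic loop of the form $\sigma\cdot\sigma^{-1}$ --- and to write down a single explicit homotopy that gradually shortens how far one travels before turning around. Concretely, I would define $H:[0,1]\times[0,1]\to M$ by
\[
H(s,t)=
\begin{cases}
\tau(t) & \text{for } 0\le t\le \frac{1-s}{2},\\
\tau\!\left(\frac{1-s}{2}\right) & \text{for } \frac{1-s}{2}\le t\le \frac{1+s}{2},\\
\tau(1-t) & \text{for } \frac{1+s}{2}\le t\le 1.
\end{cases}
\]
At $s=0$ the middle constant segment degenerates to the single parameter value $t=\frac12$, and $H(0,\cdot)$ reduces exactly to $\bar\tau$; at $s=1$ the first and third segments degenerate and $H(1,\cdot)$ is the constant loop at $\tau(0)$.

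The remaining steps are purely verificational. First I would check that the three formulas agree on the overlapping boundary parameters $t=\frac{1-s}{2}$ and $t=\frac{1+s}{2}$: at both values every adjacent piece evaluates to $\tau(\frac{1-s}{2})$ (note $1-\frac{1+s}{2}=\frac{1-s}{2}$), so by the pasting lemma $H$ is continuous, each piece being continuous on a closed region. Second I would confirm that the basepoint is fixed throughout, i.e. $H(s,0)=H(s,1)=\tau(0)$ for every $s$, so that $H$ is in fact a homotopy rel endpoints; this is the form needed to conclude in Lemma~\ref{Ha-Seong} that the corresponding class vanishes in $\pi_1$.

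I expect essentially no obstacle here: the statement is the standard fact that a loop which goes out along a path and returns along the same path is null-homotopic, and the only care required is in exhibiting one globally continuous homotopy rather than informally splicing reparametrizations. As an alternative presentation, I would reparametrize by setting $\sigma(u):=\tau(u/2)$ for $u\in[0,1]$, so that $\bar\tau=\sigma\cdot\sigma^{-1}$, and then contract $\sigma\cdot\sigma^{-1}$ by the same device; either route yields the result immediately.
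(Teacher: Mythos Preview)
Your proof is correct. The paper does not actually supply a proof of this lemma at all: it is introduced with the sentence ``We name the following elementary fact as a lemma for convenience'' and left unproved. Your explicit homotopy $H$ is the standard one, the boundary checks and the verification that $\tau$ is only ever evaluated on $[0,\tfrac12]$ are all in order, and the rel-endpoints condition is exactly what is used downstream in the proof of Proposition~3.3.
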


\begin{proposition}
$X$ is simply connected.
\end{proposition}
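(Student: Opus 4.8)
The plan is to reduce the computation of $\pi_1(X)$ to that of the flat orbifold $T^5/\Gamma$ and then to apply Lemma~\ref{Ha-Seong}. First I would run van Kampen's theorem on the decomposition $X=\breve{T}\cup\left(\bigsqcup_{j=1}^{12}S^1\times Y\right)$ glued along the $12$ copies of $S^1\times\Bbb RP^3$. Since $Y$ is a disk bundle over $S^2$ it deformation retracts onto its zero section, so $\pi_1(Y)=1$ and $\pi_1(S^1\times Y)\cong\Bbb Z$; moreover the inclusion $S^1\times\Bbb RP^3\hookrightarrow S^1\times Y$ induces the surjection $\Bbb Z\times\Bbb Z_2\to\Bbb Z$ killing the $\Bbb Z_2$ generated by the $\Bbb RP^3$-meridian. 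Hence van Kampen gives $\pi_1(X)\cong\pi_1(\breve{T})/\langle\langle m_1,\dots,m_{12}\rangle\rangle$, where $m_j$ denotes the meridian of the $j$-th singular circle. Running the identical argument with the cone neighborhoods $S^1\times(B_{\frac{1}{10}}/\{\pm1\})$ in place of $S^1\times Y$ (the cone is contractible, so again only the meridians die) shows that this same quotient equals $\pi_1(T^5/\Gamma)$. Thus $\pi_1(X)\cong\pi_1(T^5/\Gamma)$, and it suffices to prove $\pi_1(T^5/\Gamma)=1$.

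Next I would exploit that $\Gamma$ acts freely off the singular locus. Writing $\Sigma$ for the $12$ singular circles of $T^5/\Gamma$ and $Z$ for their $48$-circle preimage, the restriction $\pi\colon T^5\setminus Z\to(T^5/\Gamma)\setminus\Sigma$ is a regular covering with deck group $\Gamma$, so there is an exact sequence $1\to\pi_1(T^5\setminus Z)\to\pi_1((T^5/\Gamma)\setminus\Sigma)\to\Gamma\to1$. Because $Z$ has codimension $4$ with simply connected links $S^3$, the inclusion induces $\pi_1(T^5\setminus Z)\cong\pi_1(T^5)\cong\Bbb Z^5$. The key observation is that a meridian of a singular circle lifts to the half-loop exchanging a point with its image under the stabilizing involution, so under the boundary map it is sent to $\alpha$, $\beta$, or $\gamma$ according to the type of the circle; since these three involutions generate $\Gamma$, the meridians surject onto $\Gamma$.

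Then I would assemble the conclusion. Passing from $(T^5/\Gamma)\setminus\Sigma$ to $T^5/\Gamma$ is once more van Kampen along the cone pieces, so $\pi_1(T^5/\Gamma)\cong\pi_1((T^5/\Gamma)\setminus\Sigma)/\langle\langle m_1,\dots,m_{12}\rangle\rangle$. Killing the meridians kills all of $\Gamma$ in the exact sequence above, whence $\pi_1(T^5/\Gamma)$ is generated by the image of the normal subgroup $\pi_1(T^5\setminus Z)\cong\pi_1(T^5)$. But this image is exactly the image of $\pi_*\colon\pi_1(T^5)\to\pi_1(T^5/\Gamma)$, which is trivial by Lemma~\ref{Ha-Seong} applied to the generating loops $\rho_1,\dots,\rho_5$. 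Therefore $\pi_1(T^5/\Gamma)=1$, and hence $\pi_1(X)=1$.

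The main obstacle, and the point where each hypothesis earns its keep, is that $\pi\colon T^5\to T^5/\Gamma$ is branched along $\Sigma$ and is not a covering, so one cannot read $\pi_1(T^5/\Gamma)$ off $\pi_1(T^5)$ directly. The remedy is to delete $\Sigma$, use the honest covering there, and then control what refilling $\Sigma$ does; the whole computation hinges on showing that the extra $\pi_1$ produced by the orbifold circles, carried by the meridians which realize the reflections generating $\Gamma$, is annihilated by the resolution. This is precisely where simply-connectedness of the Eguchi--Hanson fiber $Y$ is essential: grafting $S^1\times Y$ has the same killing effect on $\pi_1$ as filling in the orbifold cone, so no new generators survive and $\pi_1(X)$ collapses onto the image of $\pi_1(T^5)$, which Lemma~\ref{Ha-Seong} sends to zero.
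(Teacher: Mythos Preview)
Your argument is correct and reaches the same conclusion as the paper, and the reduction step $\pi_1(X)\cong\pi_1(T^5/\Gamma)$ via Seifert--van Kampen is essentially identical to what the paper does at the end of its proof. Where you diverge is in the computation of $\pi_1(T^5/\Gamma)$ itself. The paper argues directly: it chooses an explicit basepoint $p_0$ near a singular circle, lifts an arbitrary loop $\sigma$ to a path $\tilde\sigma$ in $T^5$, and then runs through seven subcases according to which of the eight preimages $q_1,\dots,q_8=\Gamma\cdot p_0$ the endpoint $\tilde\sigma(1)$ hits; in each case an explicit auxiliary path $\hat\sigma_i$ is constructed (using fixed points of $\alpha,\beta,\gamma$ and Lemma~\ref{Ha-Seong-9}) so that $\sigma$ becomes visibly null-homotopic after invoking Lemma~\ref{Ha-Seong}. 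You instead package the same information structurally: the short exact sequence $1\to\pi_1(T^5\setminus Z)\to\pi_1((T^5/\Gamma)\setminus\Sigma)\to\Gamma\to1$ of the honest covering, together with the observation that the twelve meridians hit all three generators $\alpha,\beta,\gamma$ of $\Gamma$, shows that after killing the meridians only the image of $\pi_1(T^5)$ survives, and Lemma~\ref{Ha-Seong} annihilates that image. Your route is shorter and more conceptual, replacing the paper's seven explicit homotopies by a single group-theoretic step (that $NK=G$ once the meridians surject onto $\Gamma$); the paper's route, on the other hand, is entirely hands-on and makes the null-homotopies concrete, which some readers may prefer. Both proofs rest on the same two ingredients: simple connectivity of the resolving piece $Y$ (so that grafting $S^1\times Y$ kills exactly the meridians) and Lemma~\ref{Ha-Seong}.
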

\begin{proof}
We shall first show that $\pi_1(T^5/\Gamma, \pi(p_0))=\{0\}$ where $p_0$ is chosen from $T^5-Z$.
Specifically we choose $p_0$ to be $(0,0,0,\frac{1}{4}+\epsilon,0)$ for $\epsilon\in (0,\frac{1}{100})$ near one of $S_\alpha$ which is precisely
$$S_\alpha^1:=S^1\times \{(0,0,\frac{1}{4},0)\}.$$

Let $[\sigma]$ be any element in $\pi_1(T^5/\Gamma, \pi(p_0))$. Since $S^1\times \{pt\}$ is a strong deformation retract of $S^1\times (B_{\frac{1}{10}}/\{\pm 1\})$ in an obvious way,
one can use this deformation to choose a loop $\sigma:[0,1]\rightarrow T^5/\Gamma$ representing $[\sigma]$ such that $\sigma$ dose not intersect with $\pi(Z)$.
Since $$\pi:T^5-Z\rightarrow T^5/\Gamma-\pi(Z)$$ is a covering map, one can lift $\sigma$ to $\tilde{\sigma}:[0,1]\rightarrow T^5-Z$ such that $\tilde{\sigma}(0)=p_0$.

There are two possibilities for $\tilde{\sigma}$, whether it is a loop or a path with $\tilde{\sigma}(0)\ne \tilde{\sigma}(0)$.
In the former case, by Lemma \ref{Ha-Seong} $\sigma=\pi(\tilde{\sigma})$ must be homotopically trivial.

Now we deal with the latter case which needs an analysis of further subcases.
There are $|\Gamma|=8$ points in $\pi^{-1}(\pi(p_0))$. They are all in the $\epsilon$ distance away from $\pi^{-1}(\pi(S_\alpha^1))$
which is the union of $$S_\alpha^1,\  S_\alpha^2:=\beta(S_\alpha^1),\  S_\alpha^3:=\gamma(S_\alpha^1),\  S_\alpha^4:=\beta\gamma(S_\alpha^1).$$
(Recall that $\langle\beta,\gamma\rangle$ identifies 16 $S_\alpha$ to 4 of them.)
We label the points of $\pi^{-1}(\pi(p_0))$ as $q_1,\cdots,q_8$ such that
$$q_1=p_0,\ q_2=\alpha(p_0),\ q_3=\beta(p_0),\ q_4=\beta\alpha(p_0),$$
$$q_5=\gamma(p_0),\ q_6=\gamma\alpha(p_0),\ q_7=\beta\gamma(p_0),\ q_8=\beta\gamma\alpha(p_0).$$
Observe that $q_{2i-1},q_{2i}$ are away from $S_\alpha^i$ by the $\epsilon$ distance.
We have 7 subcases according to where $\tilde{\sigma}(1)$ lands.

In the 1st case when $\tilde{\sigma}(1)=q_2$, we take a path $\hat{\sigma}_2:[0,1]\rightarrow T^5$ such that $\hat{\sigma}_2(0)=p_0, \hat{\sigma}_2(1)=q_2$
and $\textrm{Im}(\hat{\sigma}_2)$ lies on a round 3-sphere $$\{x=(0,x_2,\cdots,x_5)\in T^5|\ \ ||x-(0,0,0,\frac{1}{4},0)||=\epsilon\}.$$
(Observe that $q_1$ and $q_2$ are antipodal to each other on this 3-sphere.)
Then by Lemma \ref{Ha-Seong} $\pi(\tilde{\sigma}\cdot \hat{\sigma}_2^{-1})=\sigma\cdot (\pi(\hat{\sigma}_2))^{-1}$ is homotopic to constant so that
$[\sigma]=[\pi(\hat{\sigma}_2)]\in \pi_1(T^5/\Gamma,\pi(p_0))$. Note that $\pi(\hat{\sigma}_2)$ is placed in a 4-orbifold $\{pt\}\times (B_{\frac{1}{10}}/\{\pm 1\})$
which is contractible, so
\begin{eqnarray}\label{HSKim-99}
[\pi(\hat{\sigma}_2)]=0.
\end{eqnarray}
Therefore we proved that $[\sigma]=0$ in this first case.

In the 2nd case when $\tilde{\sigma}(1)$ is $q_3$, we choose any point $q\in T^5$ which is fixed by $\beta$ and take any path $\check{\sigma}:[0,\frac{1}{2}]\rightarrow T^5$
such that $\check{\sigma}(0)=p_0, \check{\sigma}(\frac{1}{2})=q$. Define a path $\hat{\sigma}_3:[0,1]\rightarrow T^5$ as
$$
\hat{\sigma}_3(t) =
\left\{
\begin{array}{ll} \check{\sigma}(t)
   &\mbox{for } t\in [0,\frac{1}{2}]\\
   \beta(\check{\sigma}(1-t)) &\mbox{for } t\in [\frac{1}{2},1].
\end{array}\right.
$$
Then again by Lemma \ref{Ha-Seong} $\pi(\tilde{\sigma}\cdot \hat{\sigma}_3^{-1})=\sigma\cdot (\pi(\hat{\sigma}_3))^{-1}$ is homotopic to a constant so that
\begin{eqnarray*}
[\sigma]&=&[\pi(\hat{\sigma}_3)]\\
&=& 0
\end{eqnarray*}
where the second equality is due to Lemma \ref{Ha-Seong-9}, finishing in the 2nd case.

In the 3rd case when $\tilde{\sigma}(1)$ is $q_4$, this time we take $\check{\sigma}:[0,\frac{1}{2}]\rightarrow T^5$ such that
$\check{\sigma}(0)=q_2, \check{\sigma}(\frac{1}{2})=q$ where $q$ was as above, and define a path $\hat{\sigma}_4:[0,1]\rightarrow T^5$ in the same way as $\hat{\sigma}_3$
with this new $\check{\sigma}$.
Then again by Lemma \ref{Ha-Seong} $\pi(\tilde{\sigma}\cdot (\hat{\sigma}_2\cdot\hat{\sigma}_4)^{-1})=\sigma\cdot (\pi(\hat{\sigma}_2)\cdot\pi(\hat{\sigma}_4))^{-1}$ is
homotopic to a constant so that
\begin{eqnarray*}
[\sigma]&=&[\pi(\hat{\sigma}_2)\cdot\pi(\hat{\sigma}_4)]\\
&=& 0
\end{eqnarray*}
where the second equality is due to (\ref{HSKim-99}) and Lemma \ref{Ha-Seong-9}, finishing the 3rd case.

In the 4th case when $\tilde{\sigma}(1)$ is $q_5$, the proof of $[\sigma]=0$ is almost verbatim to the above 2nd case except that $\beta$ is replaced by $\gamma$ and
one has to use a point $q'\in T^5$ which is fixed by $\gamma$.

In the 5th case when $\tilde{\sigma}(1)$ is $q_6$, the proof of $[\sigma]=0$ is almost verbatim to the 3rd case by using $\gamma$ and $q'\in T^5$ chosen in the 4th case.

In the 6th case when $\tilde{\sigma}(1)$ is $q_7$, an additional treatment is needed because $\beta\gamma=\gamma\beta$ which enters into this case has no fixed points.
Using $q'\in T^5$ chosen above and choosing any path $\breve{\sigma}:[0,\frac{1}{2}]\rightarrow T^5$ such that $\breve{\sigma}(0)=q_3, \breve{\sigma}(\frac{1}{2})=q'$,
define $\hat{\sigma}_7:[0,1]\rightarrow T^5$ as
$$
\hat{\sigma}_7(t) =
\left\{
\begin{array}{ll} \breve{\sigma}(t)
   &\mbox{for } t\in [0,\frac{1}{2}]\\
   \gamma(\breve{\sigma}(1-t)) &\mbox{for } t\in [\frac{1}{2},1].
\end{array}\right.
$$
Then by Lemma \ref{Ha-Seong} $\pi(\tilde{\sigma}\cdot (\hat{\sigma}_3\cdot\hat{\sigma}_7)^{-1})=\sigma\cdot (\pi(\hat{\sigma}_3\cdot\hat{\sigma}_7))^{-1}$ is homotopic to constant so that
\begin{eqnarray*}
[\sigma]&=&[\pi(\hat{\sigma}_3)\cdot \pi(\hat{\sigma}_7)]\\
&=& 0
\end{eqnarray*}
where Lemma \ref{Ha-Seong-9} is used twice, finishing the 6th case.

In the last 7th case when $\tilde{\sigma}(1)$ is $q_8$, this time we choose $\breve{\sigma}:[0,\frac{1}{2}]\rightarrow T^5$ such that
$\breve{\sigma}(0)=q_4, \breve{\sigma}(\frac{1}{2})=q'$, and define $\hat{\sigma}_8:[0,1]\rightarrow T^5$ in the same way as $\hat{\sigma}_7$ with this new $\breve{\sigma}$.
Then by Lemma \ref{Ha-Seong} $\pi(\tilde{\sigma}\cdot (\hat{\sigma}_2\cdot\hat{\sigma}_4\cdot\hat{\sigma}_8)^{-1})=\sigma\cdot (\pi(\hat{\sigma}_2)\cdot\pi(\hat{\sigma}_4)\cdot\pi(\hat{\sigma}_8))^{-1}$ is homotopic to a constant so that
\begin{eqnarray*}
[\sigma]&=&[\pi(\hat{\sigma}_2)\cdot\pi(\hat{\sigma}_4)\cdot\pi(\hat{\sigma}_8)]\\
&=& 0
\end{eqnarray*}
where the second equality is due to (\ref{HSKim-99}) and Lemma \ref{Ha-Seong-9}, finishing the 7th case. This completes the proof that $\pi_1(T^5/\Gamma, \pi(p_0))$ is trivial.

Now recall that $X$ is obtained by replacing 12 $S^1\times (B_{\frac{1}{10}}/\{\pm 1\})$ with 12 $S^1\times Y$.
Since the zero section $S^2$ is a strong deformation retract of $Y$, $\pi_1(Y)=0$, and hence  $S^1\times (B_{\frac{1}{10}}/\{\pm 1\})$ and $S^1\times Y$ have the same fundamental group $\Bbb Z$. Moreover their boundaries are the same, i.e. $S^1\times \Bbb RP^3$ and the induced inclusion maps on $\pi_1$ are also the same.
Therefore we can conclude from the Seifert-Van Kampen theorem that these 12 replacement processes do not change fundamental groups all the way, and this proves that $X$ is simply connected.

\end{proof}

\begin{proposition}\label{queen}
$H^2(X,\Bbb Z)\simeq \Bbb Z^{13}$.
\end{proposition}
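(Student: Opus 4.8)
The plan is to reduce the whole statement to a rational Betti-number count. Since $X$ is simply connected by the previous proposition, $H_1(X,\Bbb Z)=0$, so the universal coefficient theorem gives $H^2(X,\Bbb Z)\cong \textrm{Hom}(H_2(X,\Bbb Z),\Bbb Z)$, which is free abelian of rank $b_2(X)$ and in particular has no torsion. Thus it suffices to prove $\dim_{\Bbb Q} H^2(X,\Bbb Q)=13$. For this I would use the map $\hat\pi:\hat T^5\to X$ set up above: as $\Gamma$ is finite, the transfer argument yields a natural isomorphism $H^*(X,\Bbb Q)\cong H^*(\hat T^5,\Bbb Q)^\Gamma$, so the entire computation takes place on the blow-up $\hat T^5$ together with its $\Gamma$-action.

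First I would describe $H^2(\hat T^5,\Bbb Q)$ as a $\Gamma$-module. Since $\hat T^5$ is obtained from $T^5$ by blowing up the $48$ circles of $Z$, each with trivial complex normal bundle $S^1\times\Bbb C^2$, the standard blow-up formula gives a $\Gamma$-equivariant splitting $H^2(\hat T^5,\Bbb Q)\cong \pi^*H^2(T^5,\Bbb Q)\oplus E$, where $\pi:\hat T^5\to T^5$ is the blow-down and $E=\bigoplus_{i=1}^{48}\Bbb Q\,[E_i]$ is spanned by the Poincar\'e duals of the $48$ exceptional divisors $E_i\cong S^1\times\Bbb{CP}^1$. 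Both summands are $\Gamma$-submodules, because $\pi$ and the divisors are $\Gamma$-equivariant; $\Gamma$ acts on the first by pullback and monomially on $E$, and taking $\Gamma$-invariants distributes over the direct sum. A rank check $b_2(T^5)+48=10+48=58$ confirms the formula, which can also be re-derived by Mayer--Vietoris comparing $S^1\times\Bbb C^2$ with $S^1\times\hat Y$ on each gluing region.

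Next I would compute the two invariant subspaces. On $H^1(T^5,\Bbb Q)$ the involutions act by $\alpha^*=\textrm{diag}(1,-1,-1,-1,-1)$, $\beta^*=\textrm{diag}(-1,-1,-1,1,-1)$, $\gamma^*=\textrm{diag}(-1,-1,-1,-1,1)$, and a direct inspection of signs on the basis $dx_\mu\wedge dx_\nu$ of $H^2$ shows that the only $\Gamma$-invariant two-form is $dx_2\wedge dx_3$, so $(\pi^*H^2(T^5,\Bbb Q))^\Gamma$ is one-dimensional. For $E^\Gamma$ I would use that $\Gamma$ permutes the $48$ classes $[E_i]$ in exactly $12$ orbits, matching the $12$ singular circles of $T^5/\Gamma$, the stabilizer of each $E_i$ being the order-two subgroup generated by the involution fixing the underlying circle. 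The span of a single orbit is the induced representation $\textrm{Ind}_{\langle\alpha\rangle}^{\Gamma}\chi$ (and similarly for the $\beta$- and $\gamma$-families), where $\chi$ is the stabilizer's action on the line $\Bbb Q\,[E_i]$; by Frobenius reciprocity its invariants are one-dimensional precisely when $\chi$ is trivial, and this is independent of how the remaining group elements sign the classes. Since the fixing involution acts as $-1$ on the normal $\Bbb C^2$, it is the identity on the exceptional $\Bbb{CP}^1$ and acts on the real normal $2$-plane of $E_i$ as rotation by $\pi$, hence orientation-preservingly, so it fixes the Thom class $[E_i]$ and $\chi$ is trivial. Each of the $12$ orbits therefore contributes one invariant vector, giving $\dim E^\Gamma=12$, and hence $\dim_{\Bbb Q} H^2(X,\Bbb Q)=1+12=13$; the conclusion $H^2(X,\Bbb Z)\cong\Bbb Z^{13}$ then follows from the first paragraph.

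The step I expect to be the most delicate is the bookkeeping in $E^\Gamma$, namely verifying that the fixing involution acts trivially on each Thom class $[E_i]$. The clean point that makes this manageable is that $-1$ on a complex normal line is rotation by $\pi$ on the underlying real $2$-plane and so preserves orientation, which forces the stabilizer character $\chi$ to be trivial; combined with the fact that only $\chi$ (and not the permuting signs) enters the invariant count, this pins down $E^\Gamma\cong\Bbb Q^{12}$. Assembling the equivariant blow-up splitting carefully and checking orientation-preservation of $\alpha,\beta,\gamma$ on $T^5$ (each has $\det=+1$) are the remaining points requiring attention, whereas the sign computation on $H^2(T^5)$ and the closing universal-coefficient argument are routine.
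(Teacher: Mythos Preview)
Your argument is correct and reaches the same $1+12$ decomposition as the paper, but by a genuinely different route. The paper works entirely on $X$: it uses the Mayer--Vietoris decomposition $X=\breve T\cup N$, first identifying $H_2(\breve T,\Bbb R)\simeq H_2(T^5/\Gamma,\Bbb R)\simeq\Bbb R$ via $\Gamma$-invariant harmonic $2$-forms (yielding the single class $dx_2\wedge dx_3$), then reading off $H_2(N,\Bbb R)\simeq\Bbb R^{12}$ from the twelve copies of $S^1\times Y$, and finally checking that $\partial_*=0$ in the sequence so these two pieces span $H_2(X,\Bbb R)$. Your approach instead lifts everything to $\hat T^5$, invokes the finite-quotient identification $H^*(X,\Bbb Q)\cong H^*(\hat T^5,\Bbb Q)^\Gamma$, and then uses the blow-up splitting $H^2(\hat T^5,\Bbb Q)\cong H^2(T^5,\Bbb Q)\oplus\Bbb Q^{48}$ together with a Frobenius-reciprocity count on the exceptional classes. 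This is more representation-theoretic and arguably cleaner once the machinery is in place; the paper's version is more elementary and keeps everything at the level of explicit long exact sequences.

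One point deserves a word of caution: $\hat\pi:\hat T^5\to X$ is \emph{not} an unramified covering, since each of $\alpha,\beta,\gamma$ fixes its sixteen exceptional $S^1\times\Bbb{CP}^1$ pointwise (the action of $-1$ on $\Bbb C^2$ is trivial on $\Bbb{CP}^1$). So the phrase ``transfer argument'' should be understood in the general sense that $H^*(M/G,\Bbb Q)\cong H^*(M,\Bbb Q)^G$ for a finite group acting on a reasonable space (e.g.\ Bredon, \emph{Introduction to Compact Transformation Groups}, III.7.2), not merely as covering-space transfer. With that clarification your orientation analysis of the stabilizer action on $[E_i]$ and the monomial-representation count are exactly what is needed, and the proof goes through.
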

\begin{proof}
To prove $b_2(X)=13$, first recall that $H^{2}(T^5/\Gamma,\Bbb R)$ is isomorphic to the space $\mathcal{H}^2(T^5/\Gamma,\Bbb R)$ of the harmonic 2-forms
on an orbifold $T^5/\Gamma$ with a flat orbifold metric. (The deRham theorem and the Hodge theorem hold true in an orbifold too \cite{Le-orbi}.)
Since the $\Gamma$ action preserves the flat metric of $T^5$, $\mathcal{H}^2(T^5/\Gamma,\Bbb R)$ is just the projection of the $\Gamma$-invariant elements
of $\mathcal{H}^{2}(T^{5},\Bbb R)$. By direct checking the only $\Gamma$-invariant harmonic 2-forms of $T^5$ are constant multiples of $dx_{2}\wedge dx_{3}$,
so $H_2(T^5/\Gamma, \Bbb R)\simeq H^2(T^5/\Gamma, \Bbb R)\simeq \Bbb R$.

In the followings all (co)homology groups are over $\Bbb R$, unless otherwise specified.
First $H_1(\breve{T})\simeq H^1(\breve{T})$ vanishes, since $-j^*$ in the following Mayer-Vietoris sequence is an isomorphism :
$$H^1(T^5/\Gamma)\stackrel{(k^*,l^*)}\longrightarrow H^1(\breve{T})\oplus H^1(U)\stackrel{i^*-j^*}\longrightarrow H^1(\partial\breve{T})$$
$$\{0\}\longrightarrow H^1(\breve{T})\oplus \Bbb R^{12}\longrightarrow \Bbb R^{12}$$ where $i,j,k,l$ are obvious inclusion maps.

We claim that $H_{2}(\breve{T})\simeq H_{2}(T^5/\Gamma)$.
In the Mayer-Vietoris sequence of $T^5/\Gamma$ :
$$H_2(\partial\breve{T})\stackrel{(i_*,-j_*)}\rightarrow H_{2}(\breve{T})\oplus H_2(U) \stackrel{k_*+l_*}\rightarrow H_{2}(T^5/\Gamma) \stackrel{\partial_{*}}\rightarrow
H_{1}(\partial\breve{T})\stackrel{(i_*,-j_*)}\rightarrow H_1(\breve{T})\oplus H_1(U)$$
$$\{0\}\stackrel{(i_*,-j_*)}\longrightarrow H_{2}(\breve{T})\oplus \{0\} \stackrel{k_*+l_*}\longrightarrow H_{2}(T^5/\Gamma) \stackrel{\partial_{*}}\longrightarrow
\Bbb R^{12}\stackrel{(i_*,-j_*)}\longrightarrow \{0\}\oplus \Bbb R^{12}$$ one can see that
$$-j_*:H_{1}(\partial\breve{T})\rightarrow H_1(U)$$ is an isomorphism, so $\partial_*=0$ implying that $$k_*:H_2(\breve{T})\rightarrow H_{2}(T^5/\Gamma)$$ is an isomorphism.

In the Mayer-Vietoris sequence of $X$ :
$$H_2(\partial\breve{T})\stackrel{(i_*,-j_*)}\longrightarrow H_{2}(\breve{T})\oplus H_2(N) \stackrel{k_*+l_*}\rightarrow H_{2}(X) \stackrel{\partial_{*}}\rightarrow
H_{1}(\partial\breve{T})\stackrel{(i_*,-j_*)}\longrightarrow H_1(\breve{T})\oplus H_1(N)$$
$$\{0\}\stackrel{(i_*,-j_*)}\longrightarrow \Bbb R\oplus \Bbb R^{12} \stackrel{k_*+l_*}\longrightarrow H_{2}(X) \stackrel{\partial_{*}}\longrightarrow
\Bbb R^{12}\stackrel{(i_*,-j_*)}\longrightarrow \{0\}\oplus \Bbb R^{12}$$ where $N$ denotes the union of 12 $S^1\times Y$,
$$-j_*:H_{1}(\partial\breve{T})\rightarrow H_1(N)$$ is an isomorphism, so $\partial_{*}=0$ implying that
$$k_*+l_*:H_2(\breve{T})\oplus H_2(N)\rightarrow H_{2}(X)$$ is an isomorphism. Thus we get $b_{2}(X)=13$,
and hence $H^2(X,\Bbb Z)\simeq \Bbb Z^{13}$ is obtained  by the universal coefficient theorem and $H_1(X,\Bbb Z)=\{0\}$.
\end{proof}

\begin{proposition}\label{the-poem}
$X$ is nonspin, i.e. $w_2(X)\ne 0$.
\end{proposition}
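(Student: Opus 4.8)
The plan is to prove $w_2(X)\neq 0$. Since $X$ is oriented, $w_1(X)=0$ and the sole obstruction to a spin structure is $w_2(X)$, so it suffices to show nonvanishing. The key reduction is naturality of Stiefel--Whitney classes under the inclusion of the codimension-zero piece $\breve T\hookrightarrow X$: because $TX|_{\breve T}=T\breve T$, one has $w_2(X)|_{\breve T}=w_2(\breve T)$, and hence it is enough to prove that the bulk piece $\breve T$ is \emph{not} spin. The resolved regions $S^1\times Y$ then play no role, being spin (as $Y\simeq S^2$ with $\langle w_2(TY),[S^2]\rangle=\chi(S^2)+[S^2]\cdot[S^2]=2-2\equiv 0$). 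This is the natural route precisely because the obvious $2$-cycles fail to detect $w_2$: for a closed surface $\Sigma$ in an oriented $5$-manifold a short Wu-class argument gives $\langle w_2(X),[\Sigma]\rangle=\langle w_2(\nu_\Sigma),[\Sigma]\rangle$, the mod-$2$ self-intersection, and this vanishes on each exceptional $S^2$ (self-intersection $-2$), on the torus dual to $dx_2\wedge dx_3$ (trivial normal bundle), and on the torsion torus $S^1\times\Bbb RP^1\subset S^1\times\Bbb RP^3$ (since $\Bbb RP^3$ is parallelizable). Thus $w_2$ is invisible to the free part of $H_2$, and I would locate it in $\breve T$.

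Next I would detect $w_2(\breve T)$ through the $\Gamma$-action. A spin structure on $\breve T$ pulls back to a $\Gamma$-invariant spin structure on the spin manifold $\tilde{\breve T}=\pi^{-1}(\breve T)\subset T^5$ whose $\Gamma$-action lifts to the spin bundle respecting the group relations; so $\breve T$ is spin only if $\Gamma$ lifts to a copy of $(\Bbb Z_2)^3$ inside $\textrm{Spin}(5)$. I would compute these lifts in the Clifford algebra. Each of $\alpha,\beta,\gamma$ has linear part acting as $-1$ on the coordinate subspace indexed by $S_\alpha=\{2,3,4,5\}$, $S_\beta=\{1,2,3,5\}$, $S_\gamma=\{1,2,3,4\}$ (all lying in $SO(5)$), so a lift is the Clifford monomial $\tilde g=\prod_{i\in S_g}e_i$, well defined up to sign. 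Using $e_i^2=-1$ and $e_ie_j=-e_je_i$, a lift squares to $(-1)^{|S_g|(|S_g|+1)/2}=(-1)^{10}=+1$, while for $g\neq h$
$$\tilde g\,\tilde h=(-1)^{\,|S_g|\,|S_h|-|S_g\cap S_h|}\,\tilde h\,\tilde g.$$
Since $|S_g|=4$ and $|S_g\cap S_h|=3$ for every pair, the sign is $(-1)^{16-3}=-1$: the three lifts pairwise \emph{anticommute}. Hence $\Gamma$ admits no isomorphic lift; the $\{\pm\tilde g\}$ generate a nontrivial central $\Bbb Z_2$-extension of $\Gamma$ whose class in $H^2(\Gamma;\Bbb Z_2)=H^2(B\Gamma;\Bbb Z_2)$ is $ab+bc+ca$ (squares trivial, all commutators nontrivial). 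Pulling back by the classifying map $f:\breve T\to B\Gamma$ of the cover $\tilde{\breve T}\to\breve T$, this identifies $w_2(\breve T)=f^*(ab+bc+ca)$.

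The step I expect to be the main obstacle is the final nonvanishing: one must verify $f^*(ab+bc+ca)\neq 0$ in $H^2(\breve T;\Bbb Z_2)$. The three pullbacks $A=f^*a$, $B=f^*b$, $C=f^*c\in H^1(\breve T;\Bbb Z_2)$ are linearly independent because $\pi_1(\breve T)$ surjects onto $\Gamma$, but one still has to rule out that the degree-two combination $AB+BC+CA$ becomes zero after restriction to $\breve T$. I would settle this by Poincar\'e--Lefschetz duality, producing an explicit closed surface $\Sigma\subset\breve T$ with $\langle AB+BC+CA,[\Sigma]\rangle=1$: the natural model is a coordinate $2$-torus of $T^5$ transverse to the translation directions of two involutions (e.g.\ $T^2_{24}$, on which $\langle\alpha,\beta\rangle$ has fixed locus contained in $Z$), descending to a closed surface in $\breve T$ after deleting the singular set and closing up, on which two of the covering classes restrict nontrivially. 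Equivalently, one computes the relevant part of the ring $H^*(\breve T;\Bbb Z_2)$ directly from the Mayer--Vietoris presentation already assembled in Proposition \ref{queen}. Once $w_2(\breve T)\neq 0$ is established, naturality yields $w_2(X)\neq 0$ at once, so $X$ is nonspin.
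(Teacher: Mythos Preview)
Your approach mirrors the paper's: reduce to showing $\breve T$ is nonspin via the Clifford anticommutation $\hat M_\alpha\hat M_\beta=-\hat M_\beta\hat M_\alpha$ of the lifts in $\mathrm{Spin}(5)$. The paper stops at that computation, whereas you go further and correctly isolate what still has to be checked, namely that $f^*(ab+bc+ca)\neq 0$ in $H^2(\breve T;\Bbb Z_2)$. This is exactly the crux, and it is where the argument breaks.

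The anticommutation only says that the \emph{trivial} spin structure $\tilde T\times\mathrm{Spin}(5)$ is not $\Gamma$-equivariant; but $\breve T$ is spin as soon as \emph{some} spin structure on $\tilde T$ is. Equivalently, in the Serre spectral sequence of $\tilde T\to\breve T\to B\Gamma$ one has $w_2(\breve T)=f^*(ab+bc+ca)$ as you say, and the question is whether $ab+bc+ca$ lies in the image of $d_2\colon H^1(\tilde T;\Bbb Z_2)^\Gamma\to H^2(\Gamma;\Bbb Z_2)$. It does: take $\epsilon=dx_2+dx_4$ and lift $\alpha,\beta,\gamma$ by the same affine formulas to the double cover $\Bbb R^5/K$ with $K=\{n\in\Bbb Z^5:n_2+n_4\ \text{even}\}$. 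One finds
\[
\hat\alpha\hat\beta-\hat\beta\hat\alpha=-e_2,\qquad \hat\alpha\hat\gamma-\hat\gamma\hat\alpha=-e_3+e_4,\qquad \hat\beta\hat\gamma-\hat\gamma\hat\beta=e_2-e_3,
\]
none of which lie in $K$, while all three squares are the identity. Thus $d_2(dx_2+dx_4)=ab+bc+ca$ and $f^*(ab+bc+ca)=0$: the $(dx_2+dx_4)$-twisted spin structure on $\tilde T$ \emph{is} $\Gamma$-equivariant (the deck signs cancel the Clifford signs), so $\breve T$ is in fact spin and your proposed detection by a surface in $\breve T$ cannot succeed. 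The paper's proof shares this lacuna, since it implicitly assumes the pulled-back spin structure is the product one. If $w_2(X)\neq 0$ is to be established, the obstruction must be located in how the spin structures on $\breve T$ and on the twelve $S^1\times Y$ fail to match across $\partial\breve T$, not in $\breve T$ alone.
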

\begin{proof}
Assume to the contrary that $X$ is spin. Then so is its subset $\breve{T}$.
By using the global trivialization of the tangent bundle
$$TT^5\simeq T^5\times \Bbb R^5\ \ \textrm{where}\ \ \Bbb R^5=\langle\frac{\partial}{\partial x_1},\cdots,\frac{\partial}{\partial x_5}\rangle,$$
the orthonormal frame bundle $P_{SO}$ of $T^5$ can be trivialized as $T^5\times SO(5)$, so its double cover $P_{Spin}$ is trivialized as $T^5\times Spin(5)$.
The orientation-preserving isometric $\Gamma$ action on $T^5$ obviously lifts to $P_{SO}$.

Let's consider this induced $\Gamma$ action on $P_{SO}$ over $\pi^{-1}(\breve{T})$.
Since $\breve{T}$ is spin, this action must also lift to its double cover $P_{Spin}$ over $\pi^{-1}(\breve{T})$.
We shall show that whatever we take a choice of the lifts for the 3 generators $\alpha, \beta, \gamma$ of $\Gamma=(\Bbb Z_2)^3$ to bundle maps of $P_{Spin}$,
the lifted maps do not satisfy commutativity, which is contradictory.

With respect to the above trivializations, $\alpha_*(x,v)$ of $(x,v)\in T^5\times \Bbb R^5\simeq TT^5$ where $\alpha_*$ denotes the derivative map of $\alpha$ is given by
$(\alpha(x),M_\alpha v)$ where $M_\alpha$ is the matrix
$$\left(
               \begin{array}{ccccc}
                 1 & 0 & 0 & 0 & 0 \\
                 0 & -1 & 0 & 0 & 0 \\
                 0 & 0 & -1 & 0 & 0 \\
                 0 & 0 & 0 & -1 & 0 \\
                 0 & 0 & 0 & 0 & -1 \\
               \end{array}
   \right)$$
and the induced bundle map $\alpha_\star(x,A)$ of $(x,A)\in T^5\times SO(5)\simeq P_{SO}$ is given by $(\alpha(x),M_\alpha A)$.

There are two elements in $\mathfrak{p}^{-1}(M_\alpha)\subset Spin(5)$ where $\mathfrak{p}:Spin(5)\rightarrow SO(5)$ is the double covering map.
Recall that $Spin(n)$ can be expressed as the multiplicative subgroup $$\{v_1\cdot \cdots \cdot v_{2m}| \  v_i\in \Bbb R^n, ||v_i||=1, m\geq 0\}$$ of the Clifford algebra $Cl(n)$. (cf. \cite{morgan})
In this expression, $$\mathfrak{p}^{-1}(M_\alpha)=\{\pm e_2\cdot e_3\cdot e_4\cdot e_5\}$$ where $\{e_1,\cdots,e_5\}$ is the standard orthonormal basis of $\Bbb R^n$.

As a trial, let's take any one of them to be the lift $\hat{M}_\alpha$ of $M_\alpha$, and define the principal bundle map $\alpha_\bullet: P_{Spin}\rightarrow P_{Spin}$ as
$$\alpha_\bullet(x,B)=(\alpha(x),\hat{M}_\alpha B)$$ for $(x,B)\in \pi^{-1}(\breve{T})\times Spin(5)$.
Likewise let's choose $\hat{M}_\beta$ among $$\mathfrak{p}^{-1}(M_\beta)=\{\pm e_1\cdot e_2\cdot e_3\cdot e_5\}$$ where $M_\beta$ is
$$\left(
               \begin{array}{ccccc}
                 -1 & 0 & 0 & 0 & 0 \\
                 0 & -1 & 0 & 0 & 0 \\
                 0 & 0 & -1 & 0 & 0 \\
                 0 & 0 & 0 & 1 & 0 \\
                 0 & 0 & 0 & 0 & -1 \\
               \end{array}
   \right)$$
and define the bundle map $\beta_\bullet$.
Whatever the choice may be, it turns out that $$\hat{M}_\alpha\cdot \hat{M}_\beta=- \hat{M}_\beta\cdot \hat{M}_\alpha,$$
since $$(e_2\cdot e_3\cdot e_4\cdot e_5)\cdot (e_1\cdot e_2\cdot e_3\cdot e_5)=-(e_1\cdot e_2\cdot e_3\cdot e_5)\cdot (e_2\cdot e_3\cdot e_4\cdot e_5).$$
Thus $$\alpha_\bullet\circ\beta_\bullet=-\beta_\bullet\circ\alpha_\bullet,$$
yielding the desired contradiction to that the $\Gamma$ action is lifted to $P_{Spin}$ over $\pi^{-1}(\breve{T})$.
\end{proof}

Smooth closed simply-connected 5-manifolds $M$ are classified up to diffeomorphism by S. Smale \cite{smale} and D. Barden \cite{barden}.
In particular if $H_2(M,\Bbb Z)$ is torsion-free, then such $M$ is completely classified by $k:=b_2(M)$ and $w_2(M)$, namely
$$
M \simeq
\left\{
\begin{array}{ll} S^5\#k(S^2\times S^3)
   &\mbox{if } w_2(M)=0\\
   (S^2\tilde{\times} S^3)\#(k-1)(S^2\times S^3)   &\mbox{if } w_2(M)\ne 0
\end{array}\right.
$$
where $S^2\tilde{\times} S^3$ denotes the\footnote{It is unique up to bundle isomorphism because $\textrm{Diff}(S^3)$ deformation retracts onto $O(4)$ and $\pi_1(SO(4))\simeq \Bbb Z_2$.} nontrivial $S^3$-bundle over $S^2$.
However, $H_2$ of our $X$ has nontrivial torsion.
\begin{proposition}
$H_2(X,\Bbb Z)$ has a nontrivial torsion subgroup.
\end{proposition}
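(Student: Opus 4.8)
The plan is to redo the Mayer--Vietoris computation of Proposition \ref{queen} over $\Bbb Z$ (and over $\Bbb Z_2$), so as to capture the $2$-torsion $H_1(\Bbb RP^3)\simeq\Bbb Z_2$ carried by the gluing regions $\partial\breve T=\bigsqcup_{12}S^1\times\Bbb RP^3$, which is invisible over $\Bbb R$. First I would record the integral homology of the three pieces by the K\"unneth formula: $H_1(S^1\times\Bbb RP^3)\simeq\Bbb Z\oplus\Bbb Z_2$ and $H_2(S^1\times\Bbb RP^3)\simeq\Bbb Z_2$, whereas $H_\ast(S^1\times Y)$ is torsion-free with $H_1\simeq H_2\simeq\Bbb Z$ because $Y\simeq S^2$. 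In particular the exceptional sphere generates a $\Bbb Z$ that, as in Proposition \ref{queen}, injects into $H_2(X)$; the torsion must therefore come from elsewhere.

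The decisive new input is $H_1(\breve T;\Bbb Z)$. Since the singular circles $Z$ have codimension $4$ in $T^5$, deleting them changes neither $\pi_1(T^5)\simeq\Bbb Z^5$ nor the simple connectivity of the unfolding $\Bbb R^5\setminus\tilde Z$; as $\Gamma$ acts freely on $T^5\setminus Z$, one gets that $\pi_1(\breve T)$ is the crystallographic group $G$ of the flat orbifold, fitting in $1\to\Bbb Z^5\to G\to(\Bbb Z_2)^3\to 1$ and generated by the lattice together with the affine lifts $\hat\alpha,\hat\beta,\hat\gamma$. Abelianizing $G$ is then elementary: conjugation forces $2\bar t_i=0$; one checks $\hat\alpha^2=\hat\beta^2=\hat\gamma^2=\mathrm{id}$ and that the three commutators are the translations $[\hat\alpha,\hat\beta]=t_{-e_2}$, $[\hat\beta,\hat\gamma]=t_{e_2-e_3}$, $[\hat\alpha,\hat\gamma]=t_{-e_3+e_4}$, which together kill exactly $\bar t_2,\bar t_3,\bar t_4$. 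Hence $H_1(\breve T;\Bbb Z)=G^{\mathrm{ab}}\simeq(\Bbb Z_2)^5$, with basis $\bar t_1,\bar t_5,\bar{\hat\alpha},\bar{\hat\beta},\bar{\hat\gamma}$.

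With this in hand I would run the integral Mayer--Vietoris for $X=\breve T\cup N$, $N=\bigsqcup_{12}S^1\times Y$. Exactly as in Proposition \ref{queen}, $-j_\ast$ is an isomorphism on free parts, so the new phenomena are purely $2$-torsion and enter in two places. At the $H_1$-level, the meridian classes generate a subgroup $(\Bbb Z_2)^{12}\subset H_1(\partial\breve T)$, and since the meridian of an $\alpha$- (resp. $\beta$-, $\gamma$-) circle is the local $\Bbb Z_2$-isotropy generator it maps to $\bar{\hat\alpha}$ (resp. $\bar{\hat\beta},\bar{\hat\gamma}$); as these span only $(\Bbb Z_2)^3$, the connecting map $\partial\colon H_2(X)\to H_1(\partial\breve T)$ has image $\ker\eta\simeq(\Bbb Z_2)^9$. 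At the $H_2$-level, the component of $\phi=(i_\ast,-j_\ast)$ into the torsion-free $H_2(N)\simeq\Bbb Z^{12}$ vanishes, so $\mathrm{im}\,\psi$ contains the torsion group $\mathrm{Tor}\,H_2(\breve T;\Bbb Z)\big/i_\ast\!\big(H_2(\partial\breve T;\Bbb Z)\big)$, which embeds in $H_2(X)$. Thus $H_2(X)$ has torsion provided either this quotient is nonzero or the extension $0\to\mathrm{im}\,\psi\to H_2(X)\to(\Bbb Z_2)^9\to 0$ is torsion-producing.

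The clean way to finish, sidestepping the extension problem, is to compare Betti numbers: since $H_1(X;\Bbb Z)=0$, the universal coefficient theorem gives $\dim_{\Bbb Z_2}H_2(X;\Bbb Z_2)=13+\dim_{\Bbb Z_2}\big(\mathrm{Tor}\,H_2(X)\otimes\Bbb Z_2\big)$, so it suffices to prove $\dim_{\Bbb Z_2}H_2(X;\Bbb Z_2)>13$. Poincar\'e--Lefschetz duality on $\breve T$ with the real computation pins the Betti numbers down to one unknown: over $\Bbb Q$ one gets $b_\ast(\breve T)=(1,0,1,13,11,0)$, while over $\Bbb Z_2$ the half-lives-half-dies relation $r_k+r_{4-k}=b_k(\partial\breve T;\Bbb Z_2)$ (with $r_0=1$ and $r_1=5$, the latter forced by the surjection $H_1(\partial\breve T;\Bbb Z_2)\twoheadrightarrow H_1(\breve T;\Bbb Z_2)$ coming from meridians and boundary circles) yields $b_2(\breve T;\Bbb Z_2)=6+t_2$ with $t_2:=\dim_{\Bbb Z_2}\big(\mathrm{Tor}\,H_2(\breve T;\Bbb Z)\otimes\Bbb Z_2\big)\ge 6$. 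The $\Bbb Z_2$-Mayer--Vietoris then collapses to $\dim_{\Bbb Z_2}H_2(X;\Bbb Z_2)=3+t_2+\dim_{\Bbb Z_2}\ker\phi$, so the theorem follows once $t_2+\dim_{\Bbb Z_2}\ker\phi>10$. This last point is the main obstacle: a nonzero finite cokernel of $\partial$ does not by itself force torsion --- witness $0\to\Bbb Z\xrightarrow{\,2\,}\Bbb Z\to\Bbb Z_2\to 0$ --- so one must determine $t_2$ and $\ker\phi$ honestly (any cyclic summand of order $\ge 4$ in $\mathrm{Tor}\,H_2(\breve T;\Bbb Z)$, or more than twelve even invariant factors, would already settle it). These exact values I would extract either from the Cartan--Leray spectral sequence of the free $(\Bbb Z_2)^3$-action $T^5\setminus Z\to\breve T$ or from a direct linking analysis of how the $2$-cycles of $T^5$ meet the $48$ circles $Z$; this bookkeeping, not any conceptual step, is the hard part.
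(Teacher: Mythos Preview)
Your proposal has a genuine gap: it is explicitly unfinished. You reduce the problem to the inequality $t_2+\dim_{\Bbb Z_2}\ker\phi>10$ and then write that ``this last point is the main obstacle'' and that you ``would extract'' the needed values from a Cartan--Leray spectral sequence or a direct linking analysis. That extraction is not carried out, and you correctly observe that a nonzero finite cokernel of $\partial$ does not by itself force torsion. So as it stands, nothing has been proved. Moreover, several intermediate numerical claims (for instance the assertion $t_2\ge 6$, or the formula $\dim_{\Bbb Z_2}H_2(X;\Bbb Z_2)=3+t_2+\dim_{\Bbb Z_2}\ker\phi$) are stated without justification and would themselves require nontrivial bookkeeping with the long exact sequence of $(\breve T,\partial\breve T)$ over $\Bbb Z_2$; any slip there propagates through the rest.

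The paper avoids all of this. Its argument is a short contradiction using Stiefel--Whitney classes, leaning on work already done: if $H_2(X;\Bbb Z)$ were torsion-free then $H^2(X;\Bbb Z_2)\simeq H^2(X;\Bbb Z)\otimes\Bbb Z_2$, so $w_2(X)\ne 0$ would be the mod~$2$ reduction of a nontorsion integral class. Restricting to $\breve T$ and pulling back along $\pi$ to $\pi^{-1}(\breve T)\subset T^5$ would then give $\pi^*w_2(\breve T)\ne 0$ (because the integral lift pulls back to $[dx_2\wedge dx_3]\ne 0$), contradicting $w_2(\pi^{-1}(\breve T))=0$ since $T^5$ is parallelizable. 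This uses Proposition~\ref{the-poem} (nonspinness of $\breve T$) as the key input and requires no torsion computation for $H_*(\breve T;\Bbb Z)$ at all. Your direct Mayer--Vietoris approach could in principle be pushed through, but it is substantially longer and the part you have left undone is exactly the part that carries the weight.
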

\begin{proof}
To make the argument simple, we prove by contradiction, so assume to the contrary that $H_2(X,\Bbb Z)$ is torsion-free.
Recall from the proof of Proposition \ref{queen} that $H_2(X,\Bbb R)\simeq \Bbb R^{13}$ is generated by $H_2(\breve{T},\Bbb R)\simeq \Bbb R$ and $H_2(N,\Bbb R)\simeq \Bbb R^{12}$.
Let's denote the generator (unique up to $\pm 1$) of the torsion-free part of $H^2(\breve{T},\Bbb Z)$ by $\omega$.
Thus $\omega$ is the pull-back from $H^2(X,\Bbb Z)$ under the inclusion map $k:\breve{T}\rightarrow X$.

Since $H_1(X,\Bbb Z)=\{0\}$ and $H_2(X,\Bbb Z)\simeq \Bbb Z^{13}$,
$$H^2(X,\Bbb Z_2)\simeq (\Bbb Z_2)^{13}\simeq H^2(X,\Bbb Z)\otimes \Bbb Z_2$$ by the universal coefficient theorem.
Hence $w_2(X)\ne 0\in H^2(X,\Bbb Z_2)$ is the mod 2 reduction of a torsion-free integral cohomology class.
In the proof of Proposition \ref{the-poem} we showed that $\breve{T}$ is nonspin, so $w_2(\breve{T})\ne 0$.
Thus $$w_2(\breve{T})=k^*w_2(X)\equiv \omega\ \ \ \textrm{mod}\ 2,$$ and hence $$\pi^*w_2(\breve{T})\ne 0,$$
because $\pi^*\omega$ as a real cohomology is $$[dx_{2}\wedge dx_{3}]\ne 0\in H^2(\pi^{-1}(\breve{T}),\Bbb R)\simeq H^2(T^5,\Bbb R)$$
where $\simeq$ is justified by the following Mayer-Vietoris sequence of real cohomology groups :
$$H^1(\pi^{-1}(U))\oplus H^1(\pi^{-1}(\breve{T}))\rightarrow  H^1(\partial(\pi^{-1}(U))) \stackrel{\delta^*}\rightarrow H^2(T^5)\rightarrow H^2(\pi^{-1}(U))\oplus H^2(\pi^{-1}(\breve{T}))\rightarrow  H^2(\partial(\pi^{-1}(U)))$$
$$\Bbb Z^{48}\oplus (\cdot) \longrightarrow \Bbb Z^{48} \stackrel{\delta^*}\longrightarrow H^2(T^5) \longrightarrow  \{0\}\oplus H^2(\pi^{-1}(\breve{T})) \longrightarrow  \{0\}$$
where $\pi^{-1}(U)$ is the union of 48 copies of $S^1\times B^4$ and all homomorphisms except the connecting homomorphism $\delta^*$ are induced by obvious inclusion maps. (From $\delta^*=0$, the desired isomorphism follows.)

On the other hand $$\pi^*w_2(\breve{T})=w_2(\pi^{-1}(\breve{T}))=0,$$ since $\pi^{-1}(\breve{T})\subset T^5$ is spin. This is the desired contradiction.
\end{proof}
As a consequence of this, $X$ cannot admit an effective $T^3$ action by Oh's theorem \cite{oh}.
However, it turns out that $X$ still admits an $\mathcal{F}$-structure to be explained in the next section.

\section{$\mathcal{F}$-structure on $X$}
An $\mathcal{F}$-structure introduced by Cheeger and Gromov \cite{CG-1, CG-2} generalizes an effective $T^k$-action on a manifold.
\begin{definition}
An $\mathcal{F}$-structure on a smooth manifold $M$ is given by data $(U_i,\hat{U_i},T^{k_i})$ for $i\in I$ with the following conditions:
\begin{enumerate}
    \item $\{U_i|i\in I\}$ is a locally finite open cover of $M$.
    \item Each $\pi_i:\hat{U_i}\mapsto U_i$ is a finite Galois covering with covering group $\Gamma_i$.
    \item Each torus $T^{k_i}$ of dimension $k_i$ acts smoothly and effectively on $\hat{U_i}$ in a $\Gamma_i$-covariant way, i.e.
      there exists a homomorphism $$\Psi_i:\Gamma_i\rightarrow \textrm{Aut}(T^{k_i})$$ so that $$\gamma(\mathfrak{t}x)=\Psi_i(\gamma)(\mathfrak{t})\gamma(x)$$ for any $\gamma\in \Gamma_i,$ $\mathfrak{t}\in T^{k_i},$ and $x\in \hat{U_i}$.
    \item If $U_i\cap U_j \neq \emptyset$, then there is a common covering of $\pi_i^{-1}(U_i\cap U_j)$ and $\pi_j^{-1}(U_i\cap U_j)$ such that it is invariant under the lifted actions of $T^{k_i}$ and $T^{k_j}$, and they commute.
\end{enumerate}
\end{definition}
As a special case, an $\mathcal{F}$-structure is called \emph{polarized} if the torus actions defined on the finite coverings are locally free.
The minimum of the dimensions of the orbits is called the \emph{rank} of the $\mathcal{F}$-structure.

\begin{proposition}
$X$ admits a polarized $\mathcal{F}$-structure.
\end{proposition}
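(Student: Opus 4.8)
The plan is to build the $\mathcal{F}$-structure entirely out of the translation action of $T^5$ on itself, since this is the only globally available symmetry, and to organize it into local charts that steer around the singular circles $Z$. I would use two kinds of charts. Over each of the $12$ grafted pieces $S^1\times Y\subset X$ I take the chart to be (a neighborhood of) $S^1\times Y$ with \emph{trivial} covering group, and let $T^1$ rotate the $S^1$-factor; this is exactly the free circle given downstairs by translation along the corresponding singular direction ($x_1$ for an $S_\alpha$-piece, $x_4$ for an $S_\beta$-piece, $x_5$ for an $S_\gamma$-piece). Over the flat part $\breve{T}$ I use the honest Galois covering $\pi:\pi^{-1}(\breve{T})\to\breve{T}$ with group $\Gamma=(\Bbb Z_2)^3$, which is honest precisely because $\breve{T}$ omits neighborhoods of all fixed circles and hence $\Gamma$ acts freely on $\pi^{-1}(\breve{T})$; on this covering I use the three coordinate-circle actions $S^1_{x_1},S^1_{x_4},S^1_{x_5}$ coming from translation in $T^5$. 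Working directly in $X$ over the grafted pieces is what lets me sidestep the fact that $\hat\pi:\hat{T}^5\to X$ is \emph{ramified} along the exceptional divisors (the involutions act trivially on the exceptional $S^2$'s, so the double cover $\hat Y\to Y$ is branched), which means $\hat{T}^5$ is \emph{not} a covering in the sense of the definition.

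The technical heart is to show that the three coordinate circles already suffice to cover $\breve{T}$. For $i\in\{1,4,5\}$ let $\hat W_i\subset T^5$ be the union of those circles $S^1_{x_i}\times\{pt\}$ that are disjoint from all $48$ tubes $\pi^{-1}(U)$; each $\hat W_i$ is open and, since $\Gamma$ permutes the singular circles and acts on each axis by $\pm 1$, is $\Gamma$-invariant and therefore descends to an open $W_i\subset\breve{T}$. I would prove $\hat W_1\cup\hat W_4\cup\hat W_5\supseteq\pi^{-1}(\breve{T})$ by a short case analysis keyed to the transverse positions read off from the fixed-point computations: $S_\alpha$ sits where $(x_2,x_3,x_5)\in\{0,\frac12\}^3$ and $x_4\in\{\frac14,\frac34\}$, $S_\beta$ where $(x_1,x_3,x_5)\in\{0,\frac12\}^3$ and $x_2\in\{\frac14,\frac34\}$, and $S_\gamma$ where $(x_1,x_2,x_4)\in\{0,\frac12\}^3$ and $x_3\in\{\frac14,\frac34\}$. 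Given $p\notin\bigcup\pi^{-1}(U)$, I would select the direction from the position of $(x_2,x_3)$: if $x_2$ is near $\{\frac14,\frac34\}$ while $x_3$ is near $\{0,\frac12\}$, then $S^1_{x_4}$ (parallel to the nearby $S_\beta$) misses every tube; if instead $x_3$ is near $\{\frac14,\frac34\}$ while $x_2$ is near $\{0,\frac12\}$, then $S^1_{x_5}$ works; in every remaining configuration $S^1_{x_1}$ works. Checking that the chosen circle avoids all three families reduces to two observations: a circle parallel to a given family stays at fixed transverse distance from it, and for each of the two transverse families one coordinate is always pinned away from its required value. This is exactly where the quarter-integer offsets in $\alpha,\beta,\gamma$ are used, and it is the one step I expect to be delicate.

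Finally I would verify the four axioms. Covariance (condition (3)) is automatic: every circle I use is a one-parameter subgroup of the translation $T^5$, and $\Gamma$ acts on $T^5$ affinely with signed-diagonal linear part, so the single homomorphism $\Psi:\Gamma\to\mathrm{Aut}(T^5)$ sending each $\delta$ to its linear part restricts on each $S^1_{x_i}$ to $t\mapsto\pm t$. Compatibility (condition (4)) is automatic among the bulk charts because any two translation circles of $T^5$ commute; at an overlap between a grafted chart and the bulk, the grafted $S^1$ and the bulk circle are literally the same translation circle (for an $S_\alpha$-piece the selection rule returns $\hat W_1$, matching $S^1_{x_1}$, and likewise $S_\beta\leftrightarrow\hat W_4$, $S_\gamma\leftrightarrow\hat W_5$), so on the common $\Gamma$-covering of the overlap the two lifts coincide and hence commute; distinct grafted charts are disjoint. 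Polarization holds since each circle acts freely — the $S^1$-rotation on $S^1\times Y$ is free, and translation along any axis is free on $T^5$ — so every orbit is a circle and the structure has rank $1$. Thus the main obstacle is the covering step above; once that combinatorial fact is secured the remainder is bookkeeping, and a polarized $\mathcal{F}$-structure of positive rank is precisely what forces $\mathrm{MinVol}(X)=0$ and the collapse asserted in Theorem \ref{main-thm}.
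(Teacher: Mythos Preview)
Your instinct to use the three translation circles $S^1_{x_1},S^1_{x_4},S^1_{x_5}$ (parallel to the three families $S_\alpha,S_\beta,S_\gamma$) is exactly right, and this is what the paper does too. The gap is in the compatibility check, condition~(4). That condition does not merely ask that the two torus actions commute abstractly; it asks for a \emph{common covering of the overlap that is invariant under both lifted actions}. Your bulk charts $\hat W_i$ are, by construction, only $S^1_{x_i}$-invariant. They are \emph{not} invariant under the other two circles: for example, $x_1$-translation can carry a point of $\hat W_4$ into an $S_\beta$-tube (the $S_\beta$-tubes are transverse to $x_1$), and once inside, its $x_4$-circle sits entirely in that $x_4$-invariant tube, so the translated point lies outside $\hat W_4$. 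Hence on the nonempty overlap $W_1\cap W_4$ the common $\Gamma$-cover $\hat W_1\cap\hat W_4$ fails to be $S^1_{x_1}$-invariant, and condition~(4) is not satisfied. The same failure occurs where a grafted $S_\alpha$-chart meets $W_4$ or $W_5$; your ``selection rule'' only says that the collar is contained in $W_1$, not that it is disjoint from $W_4,W_5$, and in fact it is not.

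The paper's fix is a small but decisive change of open cover: every chart is cut out by conditions on $(x_2,x_3)$ alone, so each is automatically invariant under the full $T^3$ generated by $x_1,x_4,x_5$-translation. Concretely, the three families of grafted pieces sit in three disjoint $(x_2,x_3)$-thickened sets $U^\alpha,U^\beta,U^\gamma$ (each carrying an honest $S^1$-action that descends, with trivial cover), and the single complementary bulk chart $V/\Gamma$ carries a $T^3$-action on its $\Gamma$-cover $V$. There are then only three overlaps, each of the form $(V/\Gamma)\cap U^\bullet$, whose cover $V\cap W^\bullet$ is $T^3$-invariant by design, and the $S^1\subset T^3$ inclusion makes compatibility immediate. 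Your argument can be repaired along exactly these lines: replace the twelve small grafted charts and the three circle-saturated sets $\hat W_i$ by $(x_2,x_3)$-defined neighborhoods, and put the full $T^3$ on the bulk rather than three separate circles.
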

\begin{proof}
G. Paternain and J. Petean \cite{PP-1} showed that well-known manifolds obtained by Kummer-type constructions have $\mathcal{F}$-structures. We shall follow their method.

To define an open covering of $X$ for an $\mathcal{F}$-structure, consider the following open sets in $T^5=\Bbb R^5/\Bbb Z^5$ :
$$W^\alpha(\varepsilon):=\{(x_1,\cdots,x_5)\in T^5| \ ||(x_2,x_3)-(a_2,a_3)||<\varepsilon, a_i=0,\frac{1}{2}, i=2,3\}$$
$$W^\beta(\varepsilon):=\{(x_1,\cdots,x_5)\in T^5| \ ||(x_2,x_3)-(a_2,a_3)||<\varepsilon, a_2=\frac{1}{4},\frac{3}{4}, a_3=0,\frac{1}{2}\}$$
$$W^\gamma(\varepsilon):=\{(x_1,\cdots,x_5)\in T^5| \ ||(x_2,x_3)-(a_2,a_3)||<\varepsilon, a_2=0,\frac{1}{2},  a_3=\frac{1}{4},\frac{3}{4}\}.$$
For $\varepsilon\in (0,\frac{1}{100})$ these sets are disjoint, left invariant under the $\Gamma$ action,
and each of them has 4 connected components which are copies of $T^3\times D^2_\varepsilon$.
Observe that $\pi(W^\alpha(\varepsilon))$, $\pi(W^\beta(\varepsilon))$, and $\pi(W^\gamma(\varepsilon))$ are all connected open subsets of $T^5/\Gamma$.
After the surgery resolving the singularities, they got modified into open subsets of $X$,
which are denoted by $U^\alpha(\varepsilon), U^\beta(\varepsilon), U^\gamma(\varepsilon)$ respectively.

Defining $$V:=T^5-cl(W^\alpha(\frac{\varepsilon}{2})\cup W^\beta(\frac{\varepsilon}{2})\cup W^\gamma(\frac{\varepsilon}{2}))$$ where $cl(\cdot)$ denotes the closure,
$\Gamma$ acts freely on $V$ such that $V/\Gamma$ is an open subset of $T^5/\Gamma$.
Thus $V/\Gamma$ considered as a subset of $X$ and $U^\alpha(\varepsilon), U^\beta(\varepsilon), U^\gamma(\varepsilon)$ give an open covering of $X$,
over which we define an $\mathcal{F}$-structure.

Let's denote the circle actions $x_i\mapsto x_i\pm\theta$ for $\theta\in \Bbb R/\Bbb Z$ on $T^5$ by $A_{i\pm}$.
An important fact is that the $T^3$ action given by $A_{1+}\times A_{4+}\times A_{5+}$ leaves invariant $W^\alpha(\varepsilon), W^\beta(\varepsilon),  W^\gamma(\varepsilon), V$ respectively.

To define an $S^1$ action on $U^\alpha(\varepsilon)$, note that $\langle\beta, \gamma\rangle$ swaps the 4 connected components of $W^\alpha(\varepsilon)$ while $\alpha$ preserves each component,
and denote 4 components of $W^\alpha(\varepsilon)$ by $$W^\alpha_1(\varepsilon):=\{(x_1,\cdots,x_5)\in T^5| \ ||(x_2,x_3)-(0,0)||<\varepsilon\},$$
$$W^\alpha_2(\varepsilon):=\beta(W^\alpha_1(\varepsilon)), W^\alpha_3(\varepsilon):=\gamma(W^\alpha_1(\varepsilon)),  W^\alpha_4(\varepsilon):=\beta\gamma(W^\alpha_1(\varepsilon)).$$
We define an $S^1$-action on $W^\alpha_1(\varepsilon),\cdots$, $W^\alpha_4(\varepsilon)$ by using $A_{1+}$, $A_{1-}$,  $A_{1-}$, $A_{1+}$ respectively.
Then this $S^1$-action on $W^\alpha(\varepsilon)$ is $\Gamma$-equivariant, namely commutes with the $\Gamma$-action. Therefore this action descends to
$\pi(W^\alpha(\varepsilon))$, and hence $U^\alpha(\varepsilon)$, since the surgeries do not affect the first coordinate $x_1$ on which $A_{1\pm}$ act.

In a similar way one can define a circle action on $U^\beta(\varepsilon)$ and $U^\gamma(\varepsilon)$ by using $A_{4\pm}$ and $A_{5\pm}$ respectively.

On $V/\Gamma$ we cannot define a circle action directly, but we put the $T^3=A_{1+}\times A_{4+}\times A_{5+}$ action on its Galois cover $V$. Indeed it can be made $\Gamma$-covariant by using the homomorphism
$$\Psi_V:\Gamma\rightarrow \textrm{Aut}(T^3)$$ given by
$$\Psi_V(\alpha)(t_1,t_2,t_3)=(t_1,-t_2,-t_3)$$ $$\Psi_V(\beta)(t_1,t_2,t_3)=(-t_1,t_2,-t_3)$$  $$\Psi_V(\gamma)(t_1,t_2,t_3)=(-t_1,-t_2,t_3).$$

It remains to check the last condition for $\mathcal{F}$-structure. The overlaps consist of 3 regions $$(V/\Gamma)\cap U^\alpha(\varepsilon),\ (V/\Gamma)\cap U^\beta(\varepsilon),\ (V/\Gamma)\cap U^\gamma(\varepsilon).$$
For the common Galois cover $V\cap W^\alpha(\varepsilon)$ of the 1st one, the lifted actions of $T^3=A_{1+}\times A_{4+}\times A_{5+}$
and $S^1=A_{1\pm}$ obviously commute. Likewise for the 2nd and the 3rd ones.

Finally we have a well-defined $\mathcal{F}$-structure on $X$, and it is polarized, since all actions are locally free.
\end{proof}

By the Cheeger-Gromov theorem \cite{CG-1}, any manifold admitting a polarized $F$-structure collapses with sectional curvature bounded, implying that its minimal volume vanishes. Thus we have
\begin{corollary}
$\textrm{MinVol}(X)=0$.
\end{corollary}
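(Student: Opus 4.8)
The plan is to deduce the vanishing of $\textrm{MinVol}(X)$ directly from the polarized $\mathcal{F}$-structure produced in the preceding proposition, without constructing any further metrics by hand. The essential input is the Cheeger-Gromov collapse theorem \cite{CG-1}: a smooth closed manifold carrying a polarized $\mathcal{F}$-structure admits a one-parameter family of Riemannian metrics whose sectional curvatures stay uniformly bounded while their volumes tend to zero. I would invoke this theorem for $X$ and then simply unwind the definition of the minimal volume.

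Concretely, the first step is to apply \cite{CG-1} to the polarized $\mathcal{F}$-structure on $X$, obtaining metrics $g_t$ with $|K_{g_t}|\leq C$ for a fixed constant $C$ and $\textrm{Vol}_{g_t}(X)\to 0$ as $t\to 0$. The second step is a harmless normalization: replacing each $g_t$ by $C\,g_t$ multiplies the sectional curvature by $C^{-1}$, achieving $|K|\leq 1$, while multiplying the volume by the factor $C^{5/2}$ (since $\dim X=5$), which still tends to zero. The third step is to read off the conclusion from the definition, namely
$$\textrm{MinVol}(X)=\inf_g\{\textrm{Vol}_g(X)\ |\ |K_g|\leq 1\}\leq \inf_t \textrm{Vol}_{C g_t}(X)=0,$$
and since volume is nonnegative this forces $\textrm{MinVol}(X)=0$.

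I expect essentially no genuine obstacle here, because the heavy lifting was already carried out in verifying the four axioms of the polarized $\mathcal{F}$-structure. The only point that merits a moment's care is \emph{why} the polarized (rather than a mere) $\mathcal{F}$-structure is what guarantees \emph{volume} collapse with bounded curvature: the local freeness of the torus actions on the finite covers ensures that the orbits have positive dimension everywhere, so one may shrink the metric along the orbit directions to drive the volume to zero while keeping the curvature of the collapsing family controlled. Since the proposition established exactly this polarized property, the corollary follows at once from \cite{CG-1}, just as the closing sentence of the preceding section records.
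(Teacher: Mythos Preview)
Your proposal is correct and matches the paper's own argument: the paper simply records that by the Cheeger--Gromov theorem \cite{CG-1} any manifold with a polarized $\mathcal{F}$-structure collapses with bounded sectional curvature, hence $\textrm{MinVol}(X)=0$. Your added normalization step is routine and just makes explicit what the paper leaves implicit.
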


\begin{remark}
The above $\mathcal{F}$-structure on $X$ certainly has positive rank.
Cheeger and Gromov \cite{CG-1} also proved that if an $\mathcal{F}$-structure on a manifold has positive rank then the Euler characteristic of the manifold must be 0.
Indeed our $X$ has zero Euler characteristic.

As a consequence of $\textrm{MinVol}(X)=0$, one can also deduce the vanishing of the minimal entropy and various types of other minimal volumes of $X$.(cf. \cite{PP, sung})
\end{remark}

\section{More examples}

One can use this Kummer-type method to construct other examples with such properties. For example, let the group $\Gamma\equiv\langle \alpha \rangle \oplus
\langle \beta \rangle \oplus \langle \gamma \rangle=(\Bbb Z_2)^3$ act on $T^5$ as follows :
$$\alpha : x\mapsto (x_1,-x_2,-x_3,-x_4,\frac{1}{2}-x_5),$$
$$\beta:x\mapsto (x_1,-x_2,-x_3,-x_4,-x_5),$$
$$\gamma:x\mapsto (-x_1,-x_2,\frac{1}{2}-x_3,-x_4,x_5),$$
$$\alpha\beta:x\mapsto (x_1,x_2,x_3,x_4,\frac{1}{2}+x_5),$$
$$\beta\gamma:x\mapsto (-x_1,x_2,\frac{1}{2}+x_3,x_4,-x_5),$$
$$\alpha\gamma:x\mapsto (-x_1,x_2,\frac{1}{2}+x_3,x_4,\frac{1}{2}-x_5),$$
and
$$\alpha\beta\gamma:x\mapsto (-x_1,-x_2,\frac{1}{2}-x_3,-x_4,\frac{1}{2}+x_5).$$
Again the fixed point sets are 16 copies of $S_\alpha, S_\beta,$ and $S_\gamma$ respectively, which are all disjoint.
As before, $\langle\beta,\gamma\rangle$ acts freely on the set of 16 $S_\alpha$ identifying them into 4 of them,
and similarly $\langle\alpha,\gamma\rangle$ identifies 16 $S_\beta$ to 4 of them.
But both of $\alpha$ and $\beta$ act freely on the set of 16 $S_\gamma$ identifying them into 8 $S_\gamma$,
while $\alpha\beta$ preserves each of $S_\gamma$ acting as a translation $$(p_1,\cdots,p_4,x_5) \mapsto (p_1,\cdots,p_4,\frac{1}{2}+x_5)$$ of a circle $S_\gamma$.
Thus the singular set of $T^5/\Gamma$ is a disjoint union of 8 copies of $S^1$ whose singularity is modelled on $S^1\times \Bbb C^2/\{\pm 1\}$
and 8 copies of $S^1$ whose singularity is modelled on $(\Bbb C^2/\{\pm 1\}\times S^1)/\Bbb Z_2$ which is also diffeomorphic to $S^1\times \Bbb C^2/\{\pm 1\}$.

By the same method as before, one can construct $(X,g)$. Since the circle length of $\pi(S_\gamma)=S^1\times \{\textrm{orbifold point}\}$ is the half of the circle lengths  of $\pi(S_\alpha)$ and $\pi(S_\beta)$, the circle length of the glued $S^1\times Y$ should be correspondingly the half.
The same properties listed in Theorem \ref{main-thm} hold in this example too and they can be proved in the same way.
The only difference is that $b_2(X)=b_3(X)=17$ in this case and the $\Gamma$-invariant part of $H^{2}_{DR}(T^{5})$ is $\{c\ dx_{3}\wedge dx_{4}|c\in \Bbb R\}$.

To prove the existence of a polarized $\mathcal{F}$-structure on $X$, one can use the following disjoint open sets in $T^5=\Bbb R^5/\Bbb Z$ :
$$W^{\alpha\beta}(\varepsilon):=\{(x_1,\cdots,x_5)\in T^5| \ ||(x_3,x_4)-(a_3,a_4)||<\varepsilon,  a_i=0,\frac{1}{2}, i=3,4\}$$
$$W^\gamma(\varepsilon):=\{(x_1,\cdots,x_5)\in T^5| \ ||(x_3,x_4)-(a_3,a_4)||<\varepsilon, a_3=\frac{1}{4},\frac{3}{4}, a_4=0,\frac{1}{2}\}$$
$$V:=T^5-cl(W^{\alpha\beta}(\frac{\varepsilon}{2})\cup W^\gamma(\frac{\varepsilon}{2}))$$
all of which are invariant under the $T^2=A_{1+}\times A_{5+}$ action,
and likewise define an $S^1$ action on $U^{\alpha\beta}(\varepsilon)$ and $U^{\gamma}(\varepsilon)$ by using $A_{1\pm}$ and $A_{5\pm}$ respectively.

\section{Discussions and Questions}

Every smooth closed simply connected 5-manifold $M$ admits a metric of positive scalar curvature by the well-known Gromov-Lawson surgery theorem \cite{GL}.
What about a metric of positive Ricci curvature? If $H_2(M,\Bbb Z)$ is torsion-free, then $M$ admits a metric of positive Ricci curvature \cite{sha},
and even a toric Sasaki-Einstein metric (of positive scalar curvature) under the further assumption that $M$ is spin and $b_2(M)$ is odd \cite{craig}.
It is left as an interesting question whether our constructed manifolds $X$ admit metrics of positive Ricci curvature or not.
Since our $X$ are nonspin, they cannot admit a Sasaki-Einstein structure.

There are other ways of constructing simply connected almost Ricci-flat 5-manifolds. One may also use cylindrical construction as in \cite{koval1}.

It is a difficult task to find out whether these almost-Ricci flat 5-manifolds actually admit a Ricci-flat metric or not.
Should one of them does, it would serve as a much-sought-after example of a compact simply connected Ricci-flat manifold with generic holonomy.(cf. \cite{berger})


\end{document}